\documentclass[12pt,english,a4paper]{amsart}
\usepackage[latin9]{inputenc}
\usepackage{a4}
\usepackage{amsfonts}
\usepackage{amssymb}
\usepackage{amsmath}
\usepackage{amsthm}
\usepackage{changepage}
\usepackage{nomencl}
\usepackage{geometry}
\usepackage{tikz}
\usepackage{verbatim}
\usetikzlibrary{matrix}
\usepackage[toc,page]{appendix}
\pagestyle{headings}
\parskip0.5em
\begin{document}
\providecommand{\keywords}[1]{\textbf{\textit{Keywords: }} #1}
\newtheorem{theorem}{Theorem}[section]
\newtheorem{lemma}[theorem]{Lemma}
\newtheorem{prop}[theorem]{Proposition}
\newtheorem{kor}[theorem]{Corollary}
\theoremstyle{definition}
\newtheorem{defi}{Definition}[section]
\theoremstyle{remark}
\newtheorem{remark}{Remark}[section]
\newtheorem{problem}{Problem}[section]
\newtheorem{question}{Question}[section]
\newtheorem{condenum}{Condition}[section]

\newcommand{\cc}{{\mathbb{C}}}   
\newcommand{\ff}{{\mathbb{F}}}  
\newcommand{\nn}{{\mathbb{N}}}   
\newcommand{\qq}{{\mathbb{Q}}}  
\newcommand{\rr}{{\mathbb{R}}}   
\newcommand{\zz}{{\mathbb{Z}}}  
\newcommand{\fp}{{\mathfrak{p}}}

\title[Grunwald problem and specialization]{The Grunwald problem and specialization of families of regular Galois extensions}
\author{Joachim K\"onig}
\email{jkoenig@kaist.ac.kr}
\address{Department of Mathematical Sciences, Kaist, 291 Daehak-ro, Yuseong-gu, Daejeon (South Korea)}
\maketitle
\begin{abstract}
We investigate specializations of infinite families of regular Galois extensions over number fields.
The problem to what extent the local behaviour of specializations of one single regular Galois extension can be prescribed has been investigated by D\`ebes and Ghazi in the unramified case, and by Legrand, Neftin and the author in general.
Here, we generalize these results and give a partial solution to Grunwald problems using Galois extensions arising as specializations of a family of regular Galois extensions.
These are so far the most comprehensive results for groups $G$ over a number field $k$ under the only condition that $G$ occurs regularly as a Galois group over $k$.
\end{abstract}
%
%

\section{Introduction and statement of main results}

{\textit{Grunwald problems}}: Let $G$ be a finite group, $k$ be a number field and $S$ be a finite set of primes of $k$. For each $p\in S$, denote by $k_p$ the completion of $k$ at $p$, and let $F_p|k_p$ 
be a Galois extension with Galois group embedding into $G$. By a Grunwald problem (for $G$ over $k$), 
we mean the following question:
\begin{problem}[Grunwald problem]
\label{prob:grunwald}
Does there exist a Galois extension $F|k$ with group $G$ such that the completion of $F|k$ at a prime of $F$ extending $p$ equals $F_p|k_p$, for each $p\in S$?
\end{problem}

In the case where all $F_p|k_p$ are unramified/ at most tamely ramified, we speak of an unramified/ tamely ramified Grunwald problem. Of course, in the case of existence of a solution field $F|k$, 
the Galois group $D_p:=Gal(F_p|k_p)$ is a subgroup of $G$ (the decomposition group at $p$), and the Galois group $I_p:=Gal(F_p|F_p^{ur})$, with the maximal unramified subextension $F_p^{ur}$ of $F_p|k_p$, 
is a normal subgroup of $D_p$ (the inertia group at $p$). Note that, since the embedding of $F|k$ into the completion $F_p|k_p = F\cdot k_p|k_p$ is only well-defined up to automorphism, the pair $(I_p, D_p)$ 
is well-defined up to conjugation in $G$.

A question related to the above is the following:
\begin{problem}[Grunwald problem, group version]
\label{prob:grunwald_groups}
Given pairs $(I_p,D_p)$ of subgroups of $G$ which occur as inertia and decomposition group of some Galois extension of $k_p$ (for all $p\in S$), does there exist a Galois extension $F|k$ with group $G$ possessing $(I_p,D_p)$ 
as inertia and decomposition group at $p$ (up to conjugation in $G$)?
\end{problem}

We also speak of {\textit{the}} Grunwald problem (for $G$ over $k$), meaning the question of precisely which Grunwald problems (in the sense of Problem \ref{prob:grunwald} or Problem \ref{prob:grunwald_groups}) possess a solution.
Famously, Grunwald problems are known to have a solution for abelian groups, with a possible exception at primes extending the rational prime $2$. This is known as the Grunwald-Wang theorem. Results by Harari (\cite{Har07}) give positive answers for Grunwald problems for groups which are iterated semidirect products of abelian groups (again, upon excluding a finite set of bad primes). Recent work of Harpaz and Wittenberg (\cite{HW18}) gives a positive answer for all supersolvable groups, and thus in particular for all nilpotent groups.
A different direction was exhibited by Saltman, who showed that all Grunwald problems have a positive answer if the group $G$ has a generic Galois extension over $k$ (\cite{S82}).

Finally, we note that solvability of Grunwald problems outside some finite set of primes (depending on $G$) for all finite groups $G$ would follow immediately from a famous conjecture by Colliot-Th\'el\`ene (see Conjecture 3.5.8 in \cite{Serre}).%

{\textit{Main results and structure of the paper}}: A reasonable approach to solve the above problems for large classes of groups is via specialization of $k$-regular Galois extensions with group $G$. This has been used successfully by D\`ebes and Ghazi (\cite{DG}) to solve 
the unramified Grunwald problem for groups occurring as $k$-regular Galois groups, under the condition that the set $S$ of primes is disjoint to some finite set $S_0$ only depending on the given $k$-regular $G$-extension.
On the other hand, it has been shown in \cite{KLN} (Section 6) that the same result cannot be obtained in generality for ramified Grunwald problems.

The main insight of this article is the fact that the obstructions to solving all ramified Grunwald problems for a group $G$ which occur in the case of specializing just one regular extension (or also, finitely many) may vanish
by adding just one more parameter. In particular, we obtain a partial positive answer for ramified Grunwald problems (see Theorem \ref{simultaneous}), via specialization of not just one regular $G$-extension, but 
rather a one-parameter family (with certain extra conditions) of $k$-regular $G$-extensions.

We state here a special case in which the result of Theorem \ref{simultaneous} becomes particularly nice:
\begin{theorem}
\label{weak_version}
Let $E|k(s)(t)$ be a one-parameter family of regular Galois extensions with group $G$, and let $t\mapsto t_i\in k(s)\cup\{\infty\}$ be a $k(s)$-rational branch point of $E|k(s)(t)$.
Let $(I,D)$ be the inertia and decomposition group at $t\mapsto t_i$ in $E|k(s)(t)$, and let $F|k(s)$ be the residue field extension at $t\mapsto t_i$ in $E|k(s)(t)$.\\
Assume that $F|k(s)$ is $k$-regular. Then for every finite set $\mathcal{S}$ of primes of $k$, disjoint from a finite set of ``bad" primes (depending only on $E|k(s)(t)$), and for every tuple $(x_p \in D)_{p \in \mathcal{S}}$,
there exist infinitely many Galois extensions of $k$ with Galois group $G$ whose inertia and decomposition group at $p$ equal $I$ and $\langle I,x_p\rangle$ respectively (for each $p\in \mathcal{S}$).
More precisely, such extensions may be chosen as specializations $E_{s_0,t_0}|k$ of $E|k(s)(t)$.
\end{theorem}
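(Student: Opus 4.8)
The plan is to decouple the two pieces of local data we must realize at each $p\in\mathcal S$: the inertia group $I$, and the decomposition group modulo inertia, which is the cyclic group $\langle\overline{x_p}\rangle\le D/I$ generated by the image $\overline{x_p}$ of $x_p$. Writing $H:=D/I=\mathrm{Gal}(F|k(s))$, the key observation is that the inertia of a specialization is governed by the $t$-adic distance of the specialization point to the branch point $t\mapsto t_i$, whereas the decomposition group modulo inertia is governed by the Frobenius at $p$ of the residue extension $F|k(s)$ specialized at $s\mapsto s_0$. Since $s$ is a free parameter, the latter can be prescribed, and this is exactly the extra freedom that removes the obstruction present when one specializes a single regular extension (cf.\ \cite{KLN}, Section 6).

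First I would fix the residue behaviour. By hypothesis $F|k(s)$ is a $k$-regular Galois extension with group $H$, so I may apply the method of D\`ebes and Ghazi \cite{DG} to the \emph{unramified} Grunwald problem for $H$ over $k$: for $\mathcal S$ disjoint from the finite exceptional set attached to $F|k(s)$, there are infinitely many $s_0\in k$ such that the specialization $F_{s_0}|k$ has group $H$, is unramified at every $p\in\mathcal S$, and has Frobenius $\mathrm{Frob}_p=\overline{x_p}$ (up to conjugacy in $H$) for each $p\in\mathcal S$. I would fix one such $s_0$, requiring in addition that it avoids the finitely many values at which the family degenerates, so that $t\mapsto t_i(s_0)$ remains a $k$-rational branch point of the regular $G$-extension $E_{s_0}|k(t)$ with inertia $I$, decomposition $D$, and residue extension $F_{s_0}|k$.

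Second I would realize the inertia. For each $p\in\mathcal S$ impose the local condition $v_p\big(t_0-t_i(s_0)\big)=1$; since the contact order $1$ is prime to $|I|$, the standard local description of inertia and decomposition groups of specializations near a rational branch point (as in \cite{KLN}) shows that, for $p$ outside a finite set depending only on the family, a specialization $t\mapsto t_0$ satisfying this condition has inertia group exactly $I$ at $p$, and decomposition group $\langle I,\phi_p\rangle$, where $\phi_p\in D$ projects to $\mathrm{Frob}_p=\overline{x_p}$ in $H=D/I$. Hence $\langle I,\phi_p\rangle=\langle I,x_p\rangle$, as required. The finitely many $p$-adic conditions are open and nonempty, so by the strong form of Hilbert irreducibility over number fields (Hilbert sets meet every product of nonempty $p$-adically open sets) I may choose $t_0\in k$ simultaneously satisfying all of them and lying in the Hilbert set that guarantees $\mathrm{Gal}(E_{s_0,t_0}|k)=G$; there are infinitely many such $t_0$, giving infinitely many distinct solution fields $E_{s_0,t_0}|k$.

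The main obstacle I anticipate is \emph{uniformity of the bad primes}: the theorem requires a single finite exceptional set depending only on $E|k(s)(t)$, whereas a naive argument produces exceptional sets depending on $s_0$, and hence potentially on $\mathcal S$ through the D\`ebes--Ghazi step. Controlling this needs a good-reduction statement for the whole family $E|k(s)(t)$, uniform in $s_0$, guaranteeing that for all $p$ outside a fixed set the branch points do not collide modulo $p$, the inertia remains tame and of order $|I|$, and the residue extension $F_{s_0}|k$ is unramified at $p$ with the Frobenius computed above. A secondary difficulty is compatibility: one must check that the $s_0$ delivered by D\`ebes--Ghazi can be taken good for the subsequent $t$-specialization, which amounts to intersecting two Hilbert-type (or density) subsets of the $s$-line and verifying that this intersection remains infinite.
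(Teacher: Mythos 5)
Your overall architecture is exactly the one the paper uses (Theorem \ref{weak_version} is the special case $N_i=I_i$ of Theorem \ref{simultaneous}, proved via Theorem \ref{dec_gr_families}iii)): first specialize $s$ so that the $k$-regular residue extension $F|k(s)$ acquires the prescribed Frobenius $\overline{x_p}$ at each $p\in\mathcal{S}$ by the D\`ebes--Ghazi/Lang--Weil twisting argument, then specialize $t$ with contact order $1$ at the branch point and invoke Beckmann together with Prop.\ \ref{mainlemma_kln}ii)--iii) to read off inertia $I$ and decomposition group $\varphi^{-1}(\langle\overline{x_p}\rangle)=\langle I,x_p\rangle$, finishing with the Chinese Remainder Theorem. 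That part is correct and matches the paper.

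The genuine gap is the point you yourself flag as ``the main obstacle'' and then leave open: the uniformity in $p$ of the exceptional set. Without it the theorem as stated is not proved, because the finite set of primes excluded by Beckmann's theorem and by Prop.\ \ref{mainlemma_kln} is attached to the \emph{specialized} extension $E_{s_0}|k(t)$ and hence depends on $s_0$, which in your argument was chosen \emph{after} (and as a function of) $\mathcal{S}$; a priori the very prime $p$ you are trying to control could become a bad prime of $E_{s_0}|k(t)$. The paper closes this with Lemma \ref{lemma:no_bad_prime}: for each $p$ (outside a fixed finite set independent of $p$), the values $s_0\in O_k$ for which $p$ becomes exceptional for $E_{s_0}|k(t)$ lie in at most $m$ residue classes mod $p$, with $m$ depending only on $E|k(s)(t)$. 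This is proved by going through each source of exceptional primes --- vertical ramification (controlled by $s_0$ not being a mod-$p$ root of the discriminant $\Delta(P)(s,t)$ in $s$), collision of branch points mod $p$, non-integrality of branch points, and the four extra exceptional sets $\mathcal{S}_1,\dots,\mathcal{S}_4$ from the proof in \cite{KLN} --- and showing each excludes only boundedly many residue classes of $s_0$ mod $p$. Since Lang--Weil produces $\gg N(p)$ residue classes of $s_0$ mod $p$ with the prescribed Frobenius, for $N(p)$ large one can choose $s_0$ satisfying both requirements simultaneously; this also settles your secondary ``compatibility'' worry, since both the good and the bad $s_0$ are described by congruence conditions mod $p$ and one intersects them by counting. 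To make your proof complete you would need to supply this lemma (or an equivalent uniform good-reduction statement for the family); as written, the step ``for $p$ outside a finite set depending only on the family'' in your second paragraph is asserted but not justified.
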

Note that decomposition groups at primes of tame ramification in number field extensions are necessarily metacyclic. Furthermore, from computational evidence,
decomposition groups $D$ in geometric Galois extensions have a tendency to be ``as large as possible", i.e., to contain the full centralizer of $I$ in $G$.
In such a scenario, Theorem \ref{weak_version} then ensures a best possible answer to Problem \ref{prob:grunwald_groups} (outside of a finite
set of bad primes) under the extra assumption that the inertia group remains all of $I$.

Our precise notion of ``family" is given in Def.\ \ref{def:family}; it implies in particular that the members of a family have the 
same inertia canonical invariant (with finitely many exceptions). It is obviously unclear whether, for a given number field $k$, all finite groups possess such families, but there are known theoretical criteria in inverse Galois theory
which yield existence for certain classes of groups.

While it is not surprising that such families should allow for a wider variety of specializations than any single regular extension, this intuition has not previously been quantified (and obviously there are also exceptions to the rule).
Our most general result in this direction is contained in Theorem \ref{dec_gr_families}.

In Sections \ref{ex:psl32} and \ref{ex:elab}, we give sample applications, one for the case of a simple explicitly given family of polynomials, and one more theoretical about elementary-abelian groups.
We use the first example to demonstrate an application of our results to the problem of existence of parametric sets, showing that the existence of a one-parameter family of regular Galois extensions with group
$G$ and with some mild technical assumptions already prevents the existence of finite parametric sets (see Corollary \ref{psl32_nonparam}).
Recall here that a set $\mathcal{S}$ of $k$-regular Galois extensions of $k(t)$ with group $G$ is called parametric, if every $G$-extension of $k$ arises as a specialization of some element of $\mathcal{S}$. Existence of such sets
was investigated in several previous papers (\cite{Legrand}, \cite{KL}, \cite{KLN}).

We end by stating a strong version of the regular inverse Galois problem which, if true for some group $G$, would imply positive answers to all Grunwald problems for $G$ (via our methods) outside some finite set of primes (depending on $G$), 
over many number fields. See Theorem \ref{hypothetical}.

{\textit{Ideas used in the proofs}}: 
For the proofs of the main results Theorem \ref{dec_gr_families} and Theorem \ref{simultaneous}, we study the behaviour of the residue class field at a branch point in an extension $E/k(s)(t)$ under specialization of the extra parameter $s$,
which more precisely is a reduction of the field of constants. In particular, we use the theorem of Lang-Weil to ensure that, for a given prime $p$ of $k$ (not in some exceptional finite set) and a suitable specializiation $s\mapsto s_0$, 
the Galois groups of the mod-$p$ reduction of these specialized residue class fields can be prescribed to some extent. The reduction to Lang-Weil was also used as a central idea in the solution of unramified Grunwald problems in \cite{DG},
however in the context of mod-$p$ reductions of the underlying $G$-cover itself rather than on the level of residue field extensions.

Then, using the results of \cite{KLN} about the local behaviour of specializations of a {\textit{single}} regular Galois extension, as well as a theorem of Beckmann about ramification in specializations of regular Galois extensions,
we relate the mod-$p$ behaviour of suitable specializations of $E/k(s)(t)$ to the above mod-$p$ reduction of the residue class field at a branch point, as well as to suitable geometric inertia groups.

\section{Prerequisites}
\subsection{Basics about regular Galois extensions}
Let $K$ be a field. A Galois extension $F|K(t)$ is called $K$-regular (in the following simply {\textit {regular}}), if $F\cap \overline{K} = K$. For any $t_0\in K\cup\{\infty\}$ and any place $\mathfrak{p}$ of $F$ extending the $K$-rational place $t\mapsto t_0$, we have a residue field extension $F_{t_0}|K$. This is a Galois extension, not depending on the choice of place $\mathfrak{p}$. We call it the specialization of $F|K(t)$ at $t_0$.

Now let $K$ be of characteristic zero, and let $F|K(t)$ be a $K$-regular Galois extension $F|K(t)$ with group $G$. Such an extension has finitely many branch points $p_1,...,p_r\in \overline{K}\cup\{\infty\}$, and associated to each branch point $p_i$ is a unique conjugacy class $C_i$ of $G$, corresponding to the automorphism $(t-p_i)^{1/e_i}\mapsto \zeta (t-p_i)^{1/e_i}$ of the Laurent series field
$\overline{K}(((t-p_i)^{1/e_i}))$, where $e_i$ is minimal such that $L$ embeds into $\overline{K}(((t-p_i)^{1/e_i}))$, and $\zeta$ is a primitive $e_i$-th root of unity.\footnote{If $p_i=\infty$, one should replace $t-p_i$ by $1/t$.} This $e_i$ is the {\textit {ramification index}} at $p_i$, and equals the order of elements in the class $C_i$. The class tuple $(C_1,...,C_r)$ is called the {\textit {inertia canonical invariant}} of $L|K(t)$, 
and the tuple $((p_1,...,p_r), (C_1,...,C_r))$ the {\textit {ramification structure}}.

Note that $K$-regular Galois extensions $F|K(t)$ with group $G$ correspond one-to-one to $G$-Galois covers $f: X\to\mathbb{P}^1$, defined over $K$, of compact connected Riemann surfaces. In this paper, we will mainly stick with the function field viewpoint (except for a few places with an explicitly geometric flavor, such as Lang-Weil theorem, which make a wording in terms of covers more natural).

\subsection{Ramification and residue fields in specializations}
Let $k$ be a number field, $E|k(t)$ be a regular Galois extension with group $G$, and $t_0\in \mathbb{P}^1(k)$.
We will make extensive use of previous results relating inertia groups, residue fields etc.\ at primes $p$ in the specialized extension $E_{t_0}|k$ to those in the regular extension $E|k(t)$. The case of inertia groups is contained in work of Beckmann (\cite{Beckmann}).

Let $k$ be a number field, $a_0$ be an algebraic integer, $f\in k[X]$ be its minimal polynomial, and $\mathfrak{p}$ be a finite prime of $k$. Define $I_\mathfrak{p}(a,a_0)$ as the multiplicity of $\mathfrak{p}$ in the fractional ideal generated by $f(a)$. Obviously, we have $I_\mathfrak{p}(a,a_0)\ne 0$ only for finitely many prime ideals $\mathfrak{p}$ of $k$. With this notation, we can state an important criterion of Beckmann, relating ramification regular Galois extensions to ramification in specializations.\footnote{Compared with Beckmann's original criterion, we include the assumption that all branch points of a regular extension $E|k(t)$ are algebraic integers. This is always possible via fractional linear transformations in $t$, and eases the notation (in particular because of the definition of intersection multiplicities in Def.\ 1.1 and 4.1 in \cite{Beckmann}, which requires a distinction in cases in general).} See also Theorem I.10.10 in \cite{MM}, which is closer in wording to our version.
\begin{prop}[Beckmann]
\label{beckmann}
Let $k$ be a number field and $E|k(t)$ be a regular Galois extension with Galois group $G$.
Assume that all branch points of $E|k(t)$ are finite and algebraic integers.
Then for all but finitely many primes $\mathfrak{p}$ of $k$ (with the exceptional set depending on $E|k(t)$), the following holds:\\
If $a\in k$ is not a branch point of $E|k(t)$ then the following condition is necessary for $\mathfrak{p}$ to be ramified in the specialization $E_a|k$:
 $$I_{\mathfrak{p}}(a,a_i)>0 \text{ for some (automatically unique) branch point $a_i$.}$$
Furthermore, the inertia group of a prime extending $\mathfrak{p}$ in the specialization $E_a|k$ is then conjugate in $G$ to $\langle\tau^{I_{\mathfrak{p}}(a,a_i)}\rangle$, where $\tau$ is a generator of an inertia subgroup over the branch point $t\mapsto a_i$ of $k(t)$.
\end{prop}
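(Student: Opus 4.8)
The plan is to reduce the assertion to a purely local computation over the completion $k_{\mathfrak{p}}$ by exploiting good reduction of the $G$-cover $f\colon X\to\mathbb{P}^1$ attached to $E|k(t)$. First I would spread the cover out to a model $\mathcal{X}\to\mathbb{P}^1_{\mathcal{O}}$ over (a localization of) the ring of integers $\mathcal{O}$ of $k$, together with its branch divisor $\{a_1,\dots,a_r\}$. The finite exceptional set of primes is then defined to consist of those $\mathfrak{p}$ for which this model fails to have good, tame reduction in the relevant sense: $\mathfrak{p}$ dividing one of the ramification indices $e_i$ (so that reduction remains tame), $\mathfrak{p}$ dividing the discriminant of the cover or the resultants that keep the branch points pairwise distinct modulo $\mathfrak{p}$, and the finitely many $\mathfrak{p}$ of bad reduction of $\mathcal{X}$ itself. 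For all remaining $\mathfrak{p}$ the reduction $\overline{f}$ is again a tamely ramified $G$-cover with the same inertia canonical invariant, branched over the distinct reductions $\overline{a_i}$.

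Since ramification of $\mathfrak{p}$ in $E_a|k$ is detected at a single prime $\mathfrak{P}$ of $E_a$ above $\mathfrak{p}$, it is a local question, and I would pass to $k_{\mathfrak{p}}$ and the fiber of $f$ over $t=a$. Away from the branch locus the cover is \'etale, so if the reduction $\overline{a}$ avoids every $\overline{a_i}$, the pullback of $\mathcal{X}$ along $t=a$ is \'etale over $\mathcal{O}_{\mathfrak{p}}$ and $\mathfrak{p}$ is unramified in $E_a$. Hence ramification forces $\overline{a}=\overline{a_i}$ for some $i$. By definition $I_{\mathfrak{p}}(a,a_i)$ is the multiplicity of $\mathfrak{p}$ in the ideal $(f_i(a))$, where $f_i$ is the minimal polynomial of the algebraic integer $a_i$; since $f_i(a)\equiv 0\pmod{\mathfrak{p}}$ exactly when $\overline{a}$ is a root of $\overline{f_i}$, the collision $\overline{a}=\overline{a_i}$ is equivalent to $I_{\mathfrak{p}}(a,a_i)>0$. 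Uniqueness of $a_i$ is then immediate from the pairwise distinctness of the $\overline{a_j}$ guaranteed by the exceptional set.

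For the inertia group I would use the explicit tame local shape of the cover near $a_i$. After choosing a local parameter, an inertia generator $\tau$ acts on the Laurent series as $(t-a_i)^{1/e_i}\mapsto\zeta_{e_i}(t-a_i)^{1/e_i}$, so over $k_{\mathfrak{p}}$ the completed local ring of $X$ above $a_i$ is the tame Kummer extension obtained by adjoining a root $y$ of $Y^{e_i}=(t-a_i)\cdot u$ for a unit $u$. Specializing $t=a$ gives the extension of $k_{\mathfrak{p}}$ generated by a root of $Y^{e_i}=(a-a_i)u$, and since the valuation governing $a-a_i$ is precisely $m:=I_{\mathfrak{p}}(a,a_i)$, this radical extension is totally (tamely) ramified of degree $e_i/\gcd(e_i,m)$. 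Tracking the Galois action identifies the inertia subgroup with $\langle\tau^{m}\rangle$, which indeed has order $e_i/\gcd(e_i,m)$; this is the claimed conjugacy class, the conjugacy ambiguity reflecting the choice of the prime $\mathfrak{P}$ and of the inertia generator $\tau$.

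The main obstacle I anticipate is the careful arithmetic-geometric bookkeeping required to make the good-reduction step rigorous when the branch points $a_i$ are merely algebraic integers rather than $k$-rational: one must base change appropriately so that the local Kummer model is defined and tame over $k_{\mathfrak{p}}$, control how $\mathfrak{p}$ splits in the field of definition of the $a_i$, and verify that the valuation-theoretic computation of $v_{\mathfrak{p}}(a-a_i)$ matches the integrally defined multiplicity $I_{\mathfrak{p}}(a,a_i)=v_{\mathfrak{p}}(f_i(a))$ in every splitting behaviour. Once the exceptional set is enlarged to absorb these finitely many subtleties, the remaining steps are formal.
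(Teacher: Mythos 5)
The paper does not actually prove this proposition: it is quoted from Beckmann's work (see \cite{Beckmann} and Theorem I.10.10 of \cite{MM}), with the footnoted normalization that all branch points be algebraic integers. Your sketch reproduces the standard argument from those sources --- good tame reduction of the cover outside finitely many primes, detection of ramification by the specialization section meeting the branch divisor modulo $\mathfrak{p}$, and the local Kummer computation identifying the inertia group with $\langle\tau^{I_{\mathfrak{p}}(a,a_i)}\rangle$ --- so it is essentially the same approach and is correct in outline; the one point genuinely requiring care (which you yourself flag) is reconciling the valuation of $a-a_i$ at the relevant prime of $k(a_i)$ with the globally defined multiplicity $I_{\mathfrak{p}}(a,a_i)=v_{\mathfrak{p}}(f_i(a))$ when $\mathfrak{p}$ splits in $k(a_i)$, which is handled by insisting (in the exceptional set) that all conjugate branch points remain pairwise distinct modulo every prime above $\mathfrak{p}$.
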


We only note briefly that this theorem remains true for more general situations, and in particular also for the case that $k$ is a function field of characteristic zero (see \cite{Conrad}).

Next, we deal with residue fields and decomposition groups at ramified primes in specializations. For a regular extension $E|k(t)$, a value $t_0\in \mathbb{P}^1(k)$ and a prime $\fp$ of $k$, we use the notation $I_{t_0,\fp}$ and $D_{t_0,\fp}$ for the inertia and decomposition group at (a prime extending) $\fp$ in the residue field extension $E_{t_0}|k$.

The following Proposition contains some of the main results in \cite{KLN} (and occurs there in a somewhat more general setting, namely for more general fields $k$ and more general branch points $t_i$). It relates the residue field, decomposition group etc.\ at ramified primes in specializations to the respective data in the corresponding regular extension. Statements i) to iii) are contained in \cite[Theorem 4.1]{KLN}, whereas statement iv) is in \cite[Theorem 4.4]{KLN}.
\begin{prop}
\label{mainlemma_kln}
Let $k$ be a number field or a function field of characteristic zero. Let $E|k(t)$ be a regular Galois extension with group $G$.

Let 
$t_i\in k\cup\{\infty\}$ be a $k$-rational branch point of $E|k(t)$, 
and let $\fp$ be a prime of $k$, not in some explicit finite set of ``exceptional" primes (depending on $E|k(t)$).

Let $t_0\in k\cup\{\infty\}$ be a non-branch point such that $I_\fp(t_0,t_i)>0$.
Denote by $I$ and $D$ the inertia and decomposition group at (a fixed place extending) $t\mapsto t_i$ in $E|k(t)$.

Then the following hold:

\begin{itemize} 
\item[i)]
 The completion at $\fp$ of $E_{t_i}|k$ is contained in the unramified part of the completion at $\fp$ of $E_{t_0}|k$.\\
In particular, the residue extension at $\fp$ in $E_{t_i}|k$ is contained in the residue extension at $\fp$ in $E_{t_0}|k$. 
\item[ii)] Up to conjugation in $G$, the inertia group $I_{t_0,\fp}$ equals a subgroup of $I$, and the decomposition group $D_{t_0,\fp}$ fulfills $\varphi(D_{t_0,\fp})= D_{t_i,\fp}$, where $\varphi: D\to D/I$ is the canonical epimorphism.  
\item[iii)] In particular, if in addition $I_\fp(t_0,t_i)$ is coprime to $e_i:=|I|$, then $D_{t_0,\fp}$ equals $\varphi^{-1}(D_{t_i,\fp})$. Furthermore, the residue extension (resp.\ completion) at $\fp$ in $E_{t_0}|k$ equals the residue extension (resp.\ completion) at ${\fp}$ in $E_{t_i}|k$.
\item[iv)] Conversely, every Galois extension $F_\fp|k_\fp$ whose Galois group resp.\ inertia group are isomorphic (under the same isomorphism) to $\varphi^{-1}(D_{t_i,\fp})$ resp.\ $I$, occurs as the completion at $\fp$ of $E_{t_0}|k$, for infinitely many $t_0\in k$.
\end{itemize}
\end{prop}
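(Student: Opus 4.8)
The plan is to reduce all four assertions to a single explicit description of the completion of $E_{t_0}$ at $\fp$, obtained by substituting the value $u\mapsto u_0:=t_0-t_i$ into the local expansion of $E$ around the branch point $t\mapsto t_i$. Since $t_i\in k$, after a translation I may assume $t_i$ is a finite algebraic integer and take $u=t-t_i$ as local parameter. Because $k$ has characteristic zero, ramification at $t_i$ is tame, so $I=\langle\tau\rangle$ is cyclic of order $e_i$ and the completion of $E$ at a fixed place $\mathfrak{P}$ over $t\mapsto t_i$ takes the shape $E_{t_i}((w))$ with $w^{e_i}=c(u)\,u$ for some unit power series $c(u)$, where $\tau(w)=\zeta_{e_i}w$ and $D/I=\mathrm{Gal}(E_{t_i}|k)$ acts on the coefficient field $E_{t_i}$. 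This one presentation encodes all of $I$, $D$, $E_{t_i}$ and $\varphi\colon D\to D/I$, and I would use it as the common source of every claim.

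The central step — and the one I expect to be the main obstacle — is to justify the substitution rigorously. I would restrict $\fp$ to the complement of a finite exceptional set, chosen so that the cover has good reduction at $\fp$, that $\fp\nmid e_i$, that $\fp$ is unramified in $E_{t_i}$ and in $k(\zeta_{e_i})$, and that the branch points stay pairwise distinct modulo $\fp$. For such $\fp$ and a non-branch point $t_0$ with $m:=I_{\fp}(t_0,t_i)=v_{\fp}(t_0-t_i)>0$, the value $c(u_0)$ is a $\fp$-adic unit, and the reduction theory behind Beckmann's criterion (Proposition \ref{beckmann}) should identify the completion of $E_{t_0}$ at a prime $\mathfrak{q}\mid\fp$ with a factor of the \'etale algebra $(E_{t_i}\otimes_k k_{\fp})[w]/(w^{e_i}-c(u_0)\,u_0)$. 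Making this precise requires controlling the $\fp$-adic geometry of the cover in a formal neighbourhood of the reduction of $t_i$ and a Hensel/Krasner argument passing from the formal expansion to the actual completion of the specialization; once it is in hand, everything else is elementary local field theory.

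From this model I would read off i)--iii) directly. Fixing an unramified factor $K':=\hat{(E_{t_i})}_{\mathfrak{P}'}$ of $E_{t_i}\otimes_k k_{\fp}$ of degree $|D_{t_i,\fp}|$ over $k_\fp$, and writing $d=\gcd(m,e_i)$, $e_i=de'$, $m=dm'$, the element $y:=w^{e'}/\pi^{m'}$ (with $\pi$ a uniformizer) is a unit satisfying $y^{d}\in\mathcal{O}^{\times}$ that generates the unramified part of $K'(w)/K'$, while $w$ is Eisenstein of degree $e'$ over $K'(y)$. Thus $K'$ lies in the maximal unramified subextension of the completion of $E_{t_0}$, which proves i); the inertia is $\langle\tau^{m}\rangle\subseteq I$, in agreement with Proposition \ref{beckmann}; and since $\tau(y)=\zeta_{d}y$ the extra unramified generator $y$ represents an element of $I=\ker\varphi$, so $\varphi$ kills it and $\varphi(D_{t_0,\fp})=D_{t_i,\fp}$, which is ii). When $d=1$ one has $y\in k_{\fp}$, no extra unramified part appears, the inertia is all of $I$, the residue extension coincides with that of $E_{t_i}$, and the decomposition group is the full preimage $\varphi^{-1}(D_{t_i,\fp})$ (the completion of $E_{t_0}$ being the corresponding tame totally ramified degree-$e_i$ extension of that of $E_{t_i}$); this is iii).

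For the converse iv) I would run the model backwards. A prescribed $F_{\fp}|k_{\fp}$ with Galois and inertia groups identified with $\varphi^{-1}(D_{t_i,\fp})$ and $I$ forces its maximal unramified subextension to be the residue extension of $E_{t_i}$ at $\fp$ and its tame totally ramified part to be a degree-$e_i$ extension represented by some $\pi^{m}\cdot(\mathrm{unit})$ with $\gcd(m,e_i)=1$; realizing $F_{\fp}$ then amounts to solving $c(u_0)\,u_0\equiv(\text{that class})$ in $k_{\fp}$, which a $\fp$-adic approximation (using that $c$ is a unit depending continuously on $u_0$) can arrange. Strong approximation then produces infinitely many global $t_0\in k$ with $t_0-t_i$ close enough $\fp$-adically to such a $u_0$ and avoiding the remaining finitely many bad conditions, each giving completion $F_{\fp}$ at $\fp$. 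Beyond the local model, the subtlety here is to pin down the unramified (Frobenius) part and the tame ramified part simultaneously by a single choice of $t_0$.
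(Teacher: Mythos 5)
Your proposal cannot be checked against a proof in the paper, because the paper deliberately gives none: it states that the proof is ``somewhat involved and technical'' and defers entirely to \cite{KLN} (parts i)--iii) to Theorem 4.1 there, part iv) to Theorem 4.4). Your strategy -- a tame local model $E_{t_i}((w))$, $w^{e_i}=c(u)u$ at the branch point, evaluated at $u_0=t_0-t_i$ -- is indeed in the same spirit as those proofs. But as a proof your text has a genuine gap at exactly the load-bearing step, which you flag as ``the main obstacle'' and then simply assume: the identification of the $\fp$-adic completion of $E_{t_0}$ with a factor of $(E_{t_i}\otimes_k k_\fp)[w]/(w^{e_i}-c(u_0)u_0)$. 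This is not a routine Hensel/Krasner verification to be filled in later; it is essentially the entire content of the proposition. The local model lives over $k((u))$, a completion in the geometric variable, while the conclusion concerns the arithmetic completion at $\fp$ of a number field; bridging the two requires $\fp$-integrality of the coefficients of the relevant Puiseux/power-series expansions so that evaluation at $u_0$ converges $\fp$-adically (one of the actual sources of exceptional primes -- compare the set $\mathcal{S}_2$ of primes dividing denominators of Puiseux coefficients in Lemma \ref{lemma:no_bad_prime}), good reduction and absence of vertical ramification at $\fp$, branch points remaining distinct mod $\fp$, and an argument that the evaluated expansions exhaust the places of $E_{t_0}$ above $\fp$. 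Deferring that step is deferring the theorem. Granting the model, your deductions for i)--iii) (the unit $y=w^{e'}/\pi^{m'}$, inertia $\langle\tau^m\rangle$, the kernel of restriction to $K'$ lying in $I$, hence $\varphi(D_{t_0,\fp})=D_{t_i,\fp}$, and the coprime case) are correct elementary local field theory.

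Part iv) has a second unresolved gap even modulo the local model. You must show that every admissible tame extension $F_\fp$ is hit, i.e.\ that the relevant classes in $K'^\times/(K'^\times)^{e_i}$ are realized by $c(u_0)u_0$ with $u_0$ ranging only over the base field $k_\fp$ (not over $K'$), and simultaneously that the resulting group extension of $D_{t_i,\fp}$ by $I$ agrees with the prescribed one ``under the same isomorphism'' -- the extension $1\to I\to\varphi^{-1}(D_{t_i,\fp})\to D_{t_i,\fp}\to 1$ need not split, and different choices of $u_0$ can a priori yield local Galois groups with different extension structure. You name this subtlety in your last sentence but do not address it. As it stands, the proposal is a plausible outline of the strategy carried out in \cite{KLN}, not a proof: its two acknowledged ``obstacles'' are precisely the technical content that the paper declines to reproduce.
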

Since the proof of Proposition \ref{mainlemma_kln} is somewhat involved and technical, we refer to the proofs (of the more general version) given in \cite{KLN}.
\section{Families of regular Galois extensions}
\subsection{Definition and first properties}
From now on, always let $k$ denote a number field.
In the following we will treat one-parameter families of regular $G$-Galois extensions of $k(t)$ (for a finite group $G$). We define more precisely what we mean by this.
\begin{defi}
\label{def:family}
Let $s,t$ be independent transcendentals over $k$
Let $E|k(s)(t)$ be a regular Galois extension with group $G$, with branch points $p_1,...,p_r\in \overline{k(s)}\cup \{\infty\}$
%
%
 and inertia canonical invariant $(C_1,...,C_r)$ (where the $C_i$ are non-identity conjugacy classes of $G$). We call $E|k(s)(t)$ a one-parameter family of regular $G$-extensions (with ramification structure $((p_1,...,p_r), (C_1,...,C_r))$).
\end{defi}
Of course it is perfectly reasonable to define $n$-parameter families as well, for any $n\ge 2$, and indeed, the techniques used in the proofs of our main results remain valid for $n\ge 2$ as well. 
The point of our results in Section \ref{sec:hilbertgrunwald} is however that, at least potentially, already consideration of the case $n=1$ may suffice to solve all Grunwald problems for a given group $G$ (away from some fixed finite set of primes). See in particular Section \ref{sec:conj}. 

The following lemma clarifies what happens to the ramification type of families under specialization of the extra parameter $s$. Its assertion also follows easily using Hurwitz spaces and their corresponding universal families of coverings; however we prefer to give a more elementary proof here.
\begin{lemma}
\label{nondeg_spec}
Let $E|k(s)(t)$ be a one-parameter family of regular Galois extensions with group $G$, with ordered branch point set $(p_1,\dots,p_r)$, and let $I_i\le G$ denote the inertia group at the branch point $p_i$, for $=1,\dots, r$.

Then for all but finitely many specializations $s\mapsto s_0\in k$, the extension $E_{s_0}|k(t)$ is regular with Galois group $G$ (up to a canonical isomorphism, independent of $s_0$),
with branch point set $((p_1(s_0),\dots,p_r(s_0))$ and with 
inertia group $I_i$ at the branch point $p_i(s_0)$ 
($i=1,\dots,r$).
 Here the evaluation $p_i(s_0)$ is to be understood in the following way: 
 Set $p_i(s_0):=\infty$ for $p_i=\infty$, and assume from now on that all $p_i$ are $\ne \infty$.
 Set $L:=k(s)(p_1,...,p_r)\subset \overline{k(s)}$. Then $p_i(s_0)$ is the image of $p_i$ under specialization $L_{s_0}|k$. \footnote{Note that this specialization map is unique only up to conjugation in ${\textrm{Gal}}(k)$, and a priori this
 ambiguity exists separately for each $s_0\in k$. See however the proof below on how to render these maps unique using a continuity condition.}
In particular, if $p_i$ is a zero of $f(s,X)\in k(s)[X]$ and $s_0\in k$ is such that $f(s_0,X)$ is defined and separable of maximal degree,
 then $p_i(s_0)$ is a root of $f(s_0,X)$.
\end{lemma}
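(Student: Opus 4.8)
The plan is to combine an elementary spreading-out argument for the algebraic data (Galois group, regularity, branch locus) with a classical complex-analytic family argument for the local invariants (the labelling of branch points and the inertia groups), after fixing once and for all an embedding $k\hookrightarrow\cc$. To set up a model, I pick a primitive element $\theta$ of $E|k(s)(t)$, integral over $k[s,t]$, with minimal polynomial $F(s,t,Y)\in k[s,t][Y]$ monic of degree $n=|G|$. Since $E|k(s)(t)$ is Galois, $F=\prod_{g\in G}(Y-\theta_g)$ with all $\theta_g\in E$, and the $G$-action is recorded by rational functions $\theta_g=R_g(s,t,\theta)$ with a common denominator $d(s,t)\in k[s,t]$, the group law $g\mapsto R_g$ being encoded by finitely many polynomial identities among the $R_g$. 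Let $\Delta(s,t)=\mathrm{disc}_Y F\in k[s,t]$, which is nonzero. The first (finite) batch of excluded $s_0$ consists of those for which the leading coefficient of $F$ degenerates, $\Delta(s_0,t)\equiv 0$ in $t$, or $d(s_0,t)\equiv 0$; for every other $s_0\in k$ the polynomial $F(s_0,t,Y)$ is separable of degree $n$ over $k(t)$, the specialised relations still hold, and the polynomial identities persist, so the $n$ distinct roots carry a faithful $G$-action. A Bertini--Noether type specialisation of irreducibility shows that, away from finitely many further $s_0$, $F(s_0,t,Y)$ stays irreducible over $\overline{k}(t)$; hence $E_{s_0}|k(t)$ is regular Galois, and identifying $\mathrm{Gal}(E_{s_0}|k(t))$ with $G$ through the \emph{fixed} functions $R_g$ produces the canonical isomorphism independent of $s_0$.

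Next I would treat the branch locus. The ramification divisor over the $t$-line is cut out by a branch polynomial $b(s,t)\in k[s,t]$ (e.g.\ the radical of $\Delta$ in $t$, together with separate bookkeeping at $\infty$), whose roots in $t$ over $\overline{k(s)}$ are $p_1,\dots,p_r$; writing $p_i$ as a root of some $f(s,X)\in k(s)[X]$ as in the statement, I discard the finitely many $s_0$ at which $f(s_0,X)$ drops degree or becomes inseparable, i.e.\ at which two branch points collide or escape to $\infty$. For the remaining $s_0\in k$ the branch points are precisely the $r$ roots $p_i(s_0)$ of $f(s_0,X)$. To fix the labelling canonically — the issue raised in the footnote — I pass to the analytic picture: over the \emph{connected} open set $U\subset\cc$ obtained by deleting all the finitely many bad values, the branch points depend continuously on $s_0$, so following them along a path from a fixed base value pins down the correspondence $p_i\leftrightarrow p_i(s_0)$ unambiguously and compatibly with the specialisation $L_{s_0}|k$ of $L=k(s)(p_1,\dots,p_r)$.

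Finally I would show that the inertia group at $p_i(s_0)$ is still $I_i$. Over $U$ the data assembles into an analytic family of $G$-Galois covers $X_{s_0}\to\mathbb{P}^1$ whose branch points $p_i(s_0)$ never collide; by Ehresmann's theorem this smooth proper family is locally topologically trivial, so the monodromy representation $\pi_1(\mathbb{P}^1\setminus\{p_i(s_0)\})\to G$ is independent of $s_0$ up to the identifications already fixed by the continuous labelling. In particular the local monodromy at $p_i(s_0)$, hence the inertia subgroup $I_i$ and its class $C_i$, is preserved, and matching with the canonical isomorphism of the first step yields inertia group exactly $I_i$. As a purely algebraic alternative one can note that the ramification index $e_i=|I_i|$ is read off from the multiplicity of $p_i$ in $b(s,t)$ (via the tame different), that these multiplicities are constant on $U$ because the total degree is constant and no branch points merge, so $e_i$ cannot drop — consistently with the Riemann--Hurwitz relation $2g-2=|G|(-2+\sum_i(1-1/e_i))$ and the constancy of the fibre genus $g$ — and that a Puiseux expansion in $(t-p_i)^{1/e_i}$ with coefficients algebraic over $k(s)$ then identifies the inertia generator's class as $C_i$.

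The main obstacle is precisely this last step: guaranteeing that no ramification index degenerates under specialisation and that the full inertia subgroup, not merely its order, is preserved, while keeping the argument elementary. Both the topological invariance of monodromy in a family and the Riemann--Hurwitz rigidity encode the smoothness of the Hurwitz space that we are deliberately avoiding, and the real care lies in isolating the correct finite set of excluded $s_0$ so that branch points neither collide nor escape to $\infty$ and the local analytic (or Puiseux) structure stays intact.
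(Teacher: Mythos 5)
Your proposal is essentially correct, but the decisive step --- preservation of the inertia groups --- is handled by a genuinely different mechanism than in the paper. You work complex-analytically: after fixing $k\hookrightarrow\cc$ and deleting the finitely many bad parameter values, you invoke Ehresmann's theorem to make the family of $G$-covers over the connected set $U$ locally topologically trivial, so that the monodromy representation (hence each local monodromy class) is locally constant in $s_0$. This is precisely the Hurwitz-space/universal-family argument that the paper mentions as an alternative and deliberately avoids; the paper instead views $s\mapsto s_0$ as a place of $k(s)$, applies Deuring's theory of good constant reduction (same degree, same genus for almost all $s_0$), uses \cite[Lemma 8.2.4]{Jarden} to get a Galois-group isomorphism carrying each inertia group \emph{into} the corresponding reduced inertia group, and then rules out any strict growth of inertia by Riemann--Hurwitz, since the genus is preserved and the branch points stay distinct. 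Your route buys geometric transparency at the cost of leaving the valued-field framework (and of having to check that the algebraic specialization $E_{s_0}$ agrees with the analytic fibre after the exclusions); the paper's route is intrinsic and extends verbatim to situations with no preferred complex embedding. Your treatment of regularity and of the canonical identification of $\mathrm{Gal}(E_{s_0}|k(t))$ with $G$ via the fixed rational functions $R_g$ (plus Bertini--Noether) is a fine substitute for the paper's completion-at-$(s-s_0)$ argument.

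Two caveats you should repair. First, $U$ is not simply connected, so following branch points along paths does \emph{not} pin down the labelling ``unambiguously'': loops in $U$ permute the $p_i(s_0)$ and simultaneously conjugate the inertia data. This is exactly the ambiguity acknowledged in the lemma's footnote, and it is harmless only because the conclusion is stated up to conjugation in $G$ (via the branch cycle lemma for algebraically conjugate branch points); say so explicitly rather than claiming uniqueness. Second, in your ``purely algebraic alternative'' the multiplicity of $p_i$ in $\mathrm{disc}_Y F$ is not the different exponent: the polynomial discriminant also contains the square of the index $[\mathcal{O}_{E_{s_0}}:k[t][\theta]]$, which can jump under specialization, so constancy of those multiplicities does not by itself pin down $e_i$. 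To make that variant rigorous you need the genus-preservation (good reduction) input, which is what the paper's proof supplies.
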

We call the extension $E_{s_0}|k(t)$ a {\textit {non-degenerate}} member of the family $E|k(s)(t)$, if it fulfills the assertions of Lemma \ref{nondeg_spec}.
\begin{proof}
The place $s\mapsto s_{0}$ of $k(s)$ defines a non-archimedean valuation on $k(s)$. Let $\nu$ be a prolongation to $E\cdot \overline{k(s)}$, such that $\nu(t)=0$. 
Then $\nu$ induces a constant reduction $E_\nu|k(t)$ of $E|k(s)(t)$, mapping a branch point $p_i$ to $p_i(s_0)$ as described in the lemma.

Here, the image $p_i(s_0)$ is independent of the choice of prolongation $\nu$, up to algebraic conjugates over $k$. However, by a special case of Fried's branch cycle lemma, algebraically conjugate branch points
have the same inertia group up to conjugation in $G$. Therefore, the following arguments are independent of the choice of $\nu$.


With the exception of finitely many $s_{0}$, this constant reduction is a good reduction (in the sense of \cite{Deuring}) meaning that the resulting reduced extension is again a regular function field extension, say 
$\widehat{E}|k(t)$, of the same degree as $E|k(s)(t)$ and such that the genus $g(\widehat{E})$ equals $g(E)$. 
In these cases, \cite[Lemma 8.2.4]{Jarden} yields the following: There exists an isomorphism 
$$\varphi_{s_0}:{\textrm{Gal}}(E|k(s)(t)) \to {\textrm{Gal}}(E_{s_0}|k(t)),$$
such that for each branch point $t\mapsto b$ (with $b$ in the algebraic closure of $k(s)$), the inertia group $I_b$ is mapped under $\varphi_{s_0}$ into the inertia group $I_{\widehat{b}}$ at $\widehat{b}$ in $\widehat{E}$. 
Now choose $s_{0}$
such that the images of the branch points of $E|k(s)(t)$ under constant reduction are pairwise distinct. Obviously this excludes only finitely many more $s_0$. 
Then the inertia groups $I_{\widehat{b}}$ cannot be of strictly larger order than $I_b$, or otherwise $g(\widehat{E})>g(E)$. 
Therefore $b\mapsto \widehat{b}$ is a bijection from the set of branch points of $E|k(s)(t)$ to the set of branch points of 
$\widehat{E}|k(t)$ 
such that the respective inertia groups are isomorphic under $\varphi_{s_0}$.

Furthermore, ${\textrm{Gal}}(E_{s_0}|k(t))$ is canonically isomorphic (for all non-branch points $s_0\in k$) to the Galois group of the completion $E_{s_0} \cdot k(t)((s-s_0))|k(t)((s-s_0))$, 
where the compositum is uniquely determined up to embedding of $E_{s_0}|k(t)$ into an algebraic closure of $k(t)((s-s_0))$. Fix such an embedding for one given value $s_{0,0}$; 
then embeddings for all other values $s_0\in k$ are determined by continuous paths in $\mathbb{P}^1$. Since closed paths induce a simultaneous conjugation action of (the monodromy group) $G$
on the tuple of inertia group generators, 
the resulting identifications of ${\textrm{Gal}}(E_{s_0}|k(t))$ and ${\textrm{Gal}}(E|k(s)(t))$ are unique up to conjugation.
Thus, up to identifying ${\textrm{Gal}}(E|k(s)(t))$ and ${\textrm{Gal}}(E_{s_{0,0}}|k(t))$, all but finitely many values $s_0\in k$ in fact yield 
inertia group $I_i$ at the branch point $p_i(s_0)$ up to conjugation in $G$, for all $i=1,...,r$.

This shows the assertion.
\end{proof}
%


In order to avoid technicalities, we have not excluded cases which should not really be considered as ``families" in Definition \ref{def:family}; e.g.\ extensions given by polynomials constant in $s$ (in which case $E|k(s)(t)$ is isotrivial in the sense that it can be defined over $k(t)$); 
or extensions given by things like $f(X)-(t+s)$, in which case $E|k(s)(t)$ can be defined over $k(t+s)$. In this last case, the regular extensions arising from specializing $s\mapsto s_0$ are ``weakly equivalent":
They only differ by linear transformations in the variable $t$.\\
It should be noted that such ``trivial cases" automatically do not satisfy the assumptions of the theorems below, especially Theorem \ref{dec_gr_families}.

\subsection{Background on Hurwitz spaces and universal families}
Non-trivial one-parameter families arise naturally in inverse Galois theory, via rational curves on Hurwitz spaces. The following is a brief introduction into this subject; see e.g.\ \cite{FV} or \cite{RW} for more in-depth introductions.

Let $G$ be a finite group, $\mathcal{C}:=(C_1,...,C_r)$ be a $k$-rational $r$-tuple of conjugacy classes of $G$. Assume that there exists $(\sigma_1,...,\sigma_r)\in C_1\times...\times C_r$ with $\langle \sigma_1,...,\sigma_r\rangle=G$ and $\sigma_1\cdots\sigma_r=1$.

Riemann's existence theorem then asserts, for any $r$-set of branch points $p_1,...,p_r\in \mathbb{P}^1(\cc)$, the existence of a Galois covering with group $G$ and ramification type $((p_1,...,p_r),(C_1,...,C_r))$.
Via an equivalence relation, induced by isomorphism of the covering manifolds, the set of all such Galois coverings can be turned into a topological manifold, and moreover into an $r$-dimensional quasi-projective algebraic variety, commonly denoted by $\mathcal{H}^{in}(\mathcal{C})$, the (inner) Hurwitz space of $\mathcal{C}$. Cf.\ e.g.\ \cite[Section 1.2.]{FV}, \cite{RW}, or also \cite[Section 2.3]{thesis} for detailed introductions, as well as for several slight variants of the spaces $\mathcal{H}_r^{in}(G)$.

A famous result by Fried and V\"olklein (see \cite{FV}; and \cite[Thm.\ 1.7]{DF} for the version below) then links the regular inverse Galois problem over $k$ to existence of $k$-rational points on Hurwitz spaces:
\begin{theorem}
\label{hurwitz_points}
Let $G$ be a finite group with $Z(G)=1$, and let $\mathcal{C}$ be an $r$-tuple of conjugacy classes of $G$ such that $\mathcal{H}^{in}(\mathcal{C})$ is non-empty and connected.
\\
There is a universal family $\mathcal{F}:\mathcal{T}(\mathcal{C}) \to \mathcal{H}^{in}(\mathcal{C})\times \mathbb{P}^1(\cc)$ of ramified coverings, such that for each $h\in \mathcal{H}^{in}(\mathcal{C})$, the fiber cover $\mathcal{F}^{-1}(h) \to \mathbb{P}^1(\cc)$ is a ramified Galois cover with group $G$ and inertia canonical invariant $\mathcal{C}$.\\
This cover is defined regularly over a field $k \subseteq \cc$ if and only if $h$ is a $k$-rational point.
\end{theorem}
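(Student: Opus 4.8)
The plan is to build the object in three stages — a topological/analytic construction of the Hurwitz space together with a family of covers, an algebraization of this data, and finally the descent-theoretic criterion for the field of definition — with the centerless hypothesis $Z(G)=1$ entering decisively only in the last stage.

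First I would construct $\mathcal{H}^{in}(\mathcal{C})$ analytically. Fixing $r$ distinct points of $\mathbb{P}^1(\cc)$ and a standard bouquet of loops, Riemann's existence theorem identifies $G$-Galois covers with inertia canonical invariant $\mathcal{C}$ with \emph{Nielsen tuples} $(\sigma_1,\dots,\sigma_r)\in C_1\times\cdots\times C_r$ satisfying $\langle\sigma_1,\dots,\sigma_r\rangle=G$ and $\sigma_1\cdots\sigma_r=1$, taken up to simultaneous conjugation by $G$ (the ``inner'' normalization). Letting the branch points vary, the configuration space $U_r$ of $r$-subsets of $\mathbb{P}^1$ has as fundamental group the (spherical) Hurwitz braid group, which acts on the set of such Nielsen tuples; $\mathcal{H}^{in}(\mathcal{C})$ is precisely the covering space of $U_r$ attached to this action, and its connectedness is equivalent to transitivity of the braid action on the given Nielsen class. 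Over each point one has a well-defined cover, and these assemble — again by Riemann existence, now in families — into the universal family $\mathcal{F}:\mathcal{T}(\mathcal{C})\to\mathcal{H}^{in}(\mathcal{C})\times\mathbb{P}^1(\cc)$ with the asserted fibers. I would then algebraize: the spaces are shown to be quasi-projective and the analytic family to be algebraic via the Grauert--Remmert form of the existence theorem together with GAGA, which also produces a canonical model defined over $\overline{\qq}$, in fact over $\qq$.

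The heart of the theorem is the equivalence ``the fiber cover is defined regularly over $k$ iff $h\in\mathcal{H}^{in}(\mathcal{C})(k)$.'' I would establish it in two steps. First, I identify the $\textrm{Gal}(\overline{\qq}/\qq)$-action on the $\overline{\qq}$-points of $\mathcal{H}^{in}(\mathcal{C})$ with the natural Galois action on isomorphism classes of $G$-covers: concretely, Fried's branch cycle lemma describes how $\sigma\in\textrm{Gal}(\overline{\qq}/\qq)$ permutes Nielsen tuples — twisting the classes $C_i$ through the cyclotomic character — and this matches the arithmetic of the moduli interpretation. Consequently $h$ is $k$-rational exactly when the \emph{field of moduli} of the corresponding $G$-cover is contained in $k$. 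Second, I invoke that for a $G$-cover the field of moduli is a field of definition precisely when the descent obstruction vanishes, and that this obstruction is automatically trivial when $Z(G)=1$: the group of automorphisms of a $G$-Galois cover \emph{as a $G$-cover} is canonically $Z(G)$, so under the centerless hypothesis the cover is rigid, Weil's descent criterion furnishes a canonical, obstruction-free descent datum over the field of moduli, and $\mathcal{H}^{in}(\mathcal{C})$ is therefore a fine moduli space carrying a genuine universal family.

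I expect the passage from field of moduli to field of definition to be the main obstacle, and it is exactly here that $Z(G)=1$ is indispensable. Without it one obtains only that $h$ lies over its field of moduli, while the cover itself descends only after resolving a descent obstruction (an element of a suitable $H^2$ with coefficients in $Z(G)$), and the clean equivalence can fail. A secondary technical burden is the algebraization and $\qq$-rationality of the analytic family, for which I would rely on GAGA together with the rigidity of the covers (no infinitesimal deformations fixing the branch divisor) to exclude spurious moduli. These latter points I treat as standard and defer to \cite{FV} and \cite{DF} for the detailed bookkeeping.
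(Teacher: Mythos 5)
The paper does not prove this statement: it is quoted as a known result of Fried--V\"olklein, with references to \cite{FV} and to Theorem 1.7 of \cite{DF}, so there is no in-paper argument to compare against. Your sketch is the standard proof from those sources and is correct in outline: the braid-group/Nielsen-class construction of $\mathcal{H}^{in}(\mathcal{C})$, algebraization via Grauert--Remmert/GAGA, identification of the Galois action on points with the branch-cycle action on Nielsen tuples, and the passage from field of moduli to field of definition via Weil descent, which is unobstructed precisely because a $G$-cover has automorphism group $Z(G)=1$, making $\mathcal{H}^{in}(\mathcal{C})$ a fine moduli space with a genuine universal family. The one point worth flagging is that for the descent of the \emph{spaces} (as opposed to the individual fiber covers) to a model over $k$ one needs $\mathcal{C}$ to be a $k$-rational class tuple, or else one must work with the union of $\mathcal{H}^{in}$ over the cyclotomic twists of $\mathcal{C}$; the theorem as stated suppresses this, and the paper only imposes $k$-rationality of $\mathcal{C}$ in the paragraph following the theorem, so your gloss that the model is ``in fact over $\qq$'' should carry the same caveat.
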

In particular, for a $k$-rational class tuple $\mathcal{C}$ as in Theorem \ref{hurwitz_points}, the family $\mathcal{F}$ gives rise to a Galois extension $F|k(\mathcal{H})(t)$ with group $G$, where $k(\mathcal{H})$ is the function field of the Hurwitz space $\mathcal{H}:=\mathcal{H}^{in}(\mathcal{C})$. Now if this space contains not only a $k$-rational point, but a rational curve defined over $k$, then restriction to this curve yields a Galois extension $E|k(s)(t)$ with group $G$.\\
There are however well-known theoretical criteria guaranteeing (in some cases) the existence of such rational curves, via the action of the Hurwitz braid group. One such criterion is contained in Thm. III.7.8 of \cite{MM}. These criteria have yielded existence of one-parameter families of regular $G$-extensions $E|\qq(t)(s)$ for many ``small" simple groups, and several papers have been dedicated to explicitly parameterizing such families (sample papers are \cite{Malle} and \cite{KoeHurwitz}).

\subsection{Decomposition groups in specializations of one-parameter families}
In what follows, let $E|k(s)(t)$ be a one-parameter family of regular $G$-Galois extensions of $k(t)$. We investigate the behaviour of local extensions at ramified primes in number fields arising from $E|k(s)(t)$ via specialization of {\textit {both}} variables. In particular, we investigate to what extent a prescribed pair $(I,D)$ can be obtained as inertia and decomposition group in specializations $E_{s_0,t_0}|k$ at a prescribed prime of $k$.
Our main goal in this section is Theorem \ref{dec_gr_families}. It shows that, under certain additional technical assumptions, the situation in specializations of one-parameter families becomes considerably richer than for specialization of single regular extensions. See Remark \ref{rem:compare} for a comparison with the situation of one single regular extension.

We begin with a lemma about behaviour of residue fields and decomposition groups under evaluation of the extra parameter $s$. Note that, since we defined inertia and decomposition groups in $E|k(s)(t)$ with respect to the parameter $t$ 
(over the constant field $k(s)$), we should view evaluation $s\mapsto s_0\in k$ as a constant reduction, and denote the reduced function field extension by $E_{s\mapsto s_0}|k(t)$.
\begin{lemma}
\label{decomp_field_reduction}

Let $E|k(s)(t)$ be a one-parameter family of regular Galois extensions with group $G$, with a $k(s)$-rational branch point $t\mapsto m(s)\in k(s)$. Let $F|k(s)$ be the residue field extension at $t\mapsto m(s)$ in $E$, and let $D$ (resp., $I$)
denote the decomposition group (resp., inertia group) at $t\mapsto m(s)$ in $E$.\\ 
Then for almost all $s\mapsto s_0\in k$, the following hold:
\begin{itemize}
\item[a)] The residue field $R$ of places extending the ``reduced" place $t\mapsto m(s_0)$ in $E_{s\mapsto s_0}|k(t)$ equals $F_{s_0}|k$.
\item[b)] The decomposition group $D_0$ at $t\mapsto m(s_0)$ in $E_{s\mapsto s_0}|k(t)$ is a subgroup of $D$ containing $I$ and such that $D_0/I$ equals $Gal(F_{s_0}|k)$ up to canonical isomorphism.
\end{itemize}
\end{lemma}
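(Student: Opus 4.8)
The plan is to analyze the constant reduction $s\mapsto s_0$ simultaneously with the branch point $t\mapsto m(s)$, using the machinery already set up in the proof of Lemma~\ref{nondeg_spec}. First I would fix, as before, a prolongation $\nu$ of the place $s\mapsto s_0$ to $E\cdot\overline{k(s)}$ with $\nu(t)=0$, and restrict attention to the cofinitely many $s_0$ for which this is a good reduction in the sense of Deuring, so that the canonical isomorphism $\varphi_{s_0}\colon \mathrm{Gal}(E|k(s)(t))\to\mathrm{Gal}(E_{s\mapsto s_0}|k(t))$ from \cite[Lemma 8.2.4]{Jarden} exists and respects inertia groups. By Lemma~\ref{nondeg_spec} the branch point $t\mapsto m(s)$ reduces to $t\mapsto m(s_0)$ with the \emph{same} inertia group $I$, which gives the inclusion $I\subseteq D_0$ claimed in (b); the point is to track the full decomposition group and the residue field through the reduction.

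The heart of the argument is to relate the residue field extension at $t\mapsto m(s)$ over $k(s)$ to that at $t\mapsto m(s_0)$ over $k$ after reduction. I would realize the residue field extension $F|k(s)$ as the fixed field data attached to the inertia group $I$ inside the decomposition group $D$: the residue extension is governed by $D/I\cong\mathrm{Gal}(F|k(s))$. The key step is then to argue that, for almost all $s_0$, taking the residue field at $t\mapsto m(s)$ and then reducing $s\mapsto s_0$ commutes (up to the good-reduction isomorphism) with first reducing $s\mapsto s_0$ and then taking the residue field at $t\mapsto m(s_0)$. Concretely, the residue field $F$ is a finite extension of the constant field $k(s)$, and its own constant reduction at $s\mapsto s_0$ is precisely the specialization $F_{s_0}|k$ for all but finitely many $s_0$ (those avoiding ramification of $F|k(s)$ and the exceptional set of the reduction); this yields assertion~(a), $R=F_{s_0}|k$. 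For~(b) I would then identify $D_0/I$ with the Galois group of this reduced residue extension: the quotient $D_0/I$ acts faithfully on the residue field $R$ of the reduced place and, because the reduction is good and $\varphi_{s_0}$ is an inertia-preserving isomorphism, this action is identified with $\mathrm{Gal}(F_{s_0}|k)$ under the canonical isomorphism, giving $D_0/I\cong\mathrm{Gal}(F_{s_0}|k)$ and $D_0\subseteq D$.

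The main obstacle I anticipate is the compatibility/commutativity of the two operations, namely showing that the decomposition group at the reduced place is literally a \emph{subgroup} of $D$ (not merely abstractly isomorphic to a quotient of appropriate size) and that the residue field is exactly $F_{s_0}$ rather than some proper subextension. This requires care because constant reduction can in principle drop the degree of the residue extension if $F_{s_0}|k$ degenerates; hence I would need to exclude the finitely many $s_0$ at which $F|k(s)$ has bad reduction or at which its specialization $F_{s_0}|k$ fails to have full degree (equivalently, at which the minimal polynomial of a generator of $F$ over $k(s)$ fails to stay separable of maximal degree, exactly as in the last sentence of Lemma~\ref{nondeg_spec}). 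Once these finitely many exceptional $s_0$ are removed, the good-reduction isomorphism $\varphi_{s_0}$ transports the decomposition group $D$ of $t\mapsto m(s)$ into a subgroup $D_0\subseteq\varphi_{s_0}(D)$ of the reduced Galois group containing $I$, and a comparison of orders via $|D_0/I|=[F_{s_0}:k]=[F:k(s)]=|D/I|$ forces equality $D_0=D$ whenever $F_{s_0}|k$ has full degree; in the degenerate degree-dropping cases one records only $D_0\subsetneq D$, consistent with the stated inclusion $D_0\le D$.
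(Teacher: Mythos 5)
Your plan correctly isolates the crux of the lemma, but it does not actually resolve it. The entire content of the statement is the compatibility you flag as ``the main obstacle'': that the residue field of the reduced place $t\mapsto m(s_0)$ in $E_{s\mapsto s_0}|k(t)$ is exactly the specialization $F_{s_0}$ of the residue field $F|k(s)$, and that the decomposition group sits inside $D$. Your argument for this is to invoke the Deuring/Jarden good-reduction machinery from Lemma~\ref{nondeg_spec}, but \cite[Lemma 8.2.4]{Jarden} only controls the \emph{geometric} data (the Galois group and the inertia groups); it says nothing about decomposition groups or residue fields at a $k(s)$-rational branch point, which are arithmetic objects depending on the constant field. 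So the step ``the good-reduction isomorphism $\varphi_{s_0}$ transports the decomposition group $D$ into a subgroup containing $D_0$, and the residue field of the reduced place is $F_{s_0}$'' is precisely what needs proof and is not supplied by anything you cite. The paper closes this gap by a different device: it introduces the auxiliary non-branch point $t_0:=m(s_0)\in k$, observes that the double specialization commutes on the nose, $(E_{t_0})_{s_0}=(E_{s_0})_{t_0}$, being the residue field of a point of the variety over $(s_0,t_0)$, and then applies Proposition~\ref{mainlemma_kln} \emph{with $k(s)$ as the base field} to the single regular extension $E_{t_0}|k(s)$ at the prime $(s-s_0)$, comparing with the branch point $t_i=m(s)$. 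Since $I_{(s-s_0)}(m(s_0),m(s))=1$ for all but finitely many $s_0$ (as $m(s)-m(Y)$ is separable), part iii) of that proposition yields $F_{s_0}=(E_{t_i})_{s_0}=(E_{t_0})_{s_0}=R$, and b) follows. Some such argument (or an arithmetic refinement of the good-reduction lemma) is needed; as written, your proof assumes its own key step.

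A secondary, less serious point: you propose to exclude ``the finitely many $s_0$ at which $F_{s_0}|k$ fails to have full degree.'' That set is in general only thin, not finite, so it cannot be excluded under the lemma's ``almost all $s_0$'' quantifier. Fortunately the lemma does not require $[F_{s_0}:k]=[F:k(s)]$ --- it only asserts $R=F_{s_0}$ and $D_0/I=\mathrm{Gal}(F_{s_0}|k)$, which is why your concluding order-counting step ($|D_0/I|=|D/I|$ forcing $D_0=D$) is both unnecessary and unavailable for almost all $s_0$.
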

\begin{proof}
We first show:\\
{\textbf{Claim 1}}: $F_{s_0}\subseteq R$.

In the setting of Prop.\ \ref{mainlemma_kln}, take $k(s)$ as the base field, $t_0:=m(s_0)$, $t_i:=m(s)$ and $\mathfrak{p}$ the ideal of $k[s]$ generated by $s-s_0$.
Then $I_\mathfrak{p}(t_0,t_i)>0$, and so $(E_{t_0})_{s_0}$ (which is just the residue field of $E_{t_0}|k(s)$ at $\mathfrak{p}$) fulfills $(E_{t_0})_{s_0}\supseteq (E_{t_i})_{s_0} = F_{s_0}$ by Prop.\ \ref{mainlemma_kln}i). 
Now denote by $\nu$ a prolongation to $E$ of the valuation induced by $\mathfrak{p}$. The theory of constant reduction asserts that for almost all $s_0$, the field $E_{\nu}$ equals the residue field of $E_{s_0}$ of $E|k(t)(s)$ at 
(a place extending) $(s-s_0)\subset k(t)[s]$. Thus, for almost all $s_0\in k$, one has $(R=) (E_{\nu})_{t_0} = (E_{s_0})_{t_0}$. 
To show Claim 1, it therefore suffices to verify that the two specializations commute, which is of course always the case, as $(E_{t_0})_{s_0} = (E_{s_0})_{t_0}$ is the residue field of a point over $(s_0,t_0)$ of the algebraic variety 
corresponding to $E|k(s,t)$. Therefore $F_{s_0}\subseteq R$.

Now if $m(s)\in k$ is a constant, then $t_0=t_i$ and equality $F_{s_0}=R$ is thus obvious from the above. But otherwise, there are only finitely many $s_0\in k$ such that $I_\mathfrak{p}(t_0,t_i)\ne 1$ 
(Otherwise $(s-s_0)^2$ would have to divide $m(s)-m(s_0)$, and as $m(s)-m(Y)\in k[s,Y]$ is separable, this can only happen at finitely many $s_0$ (roots of the discriminant)). 
So by Beckmann's theorem, for all but finitely many $s_0$, the inertia group at $s\mapsto s_0$ in $E_{t_0}|k(s)$ has the maximal possible order (namely the ramification index of $t\mapsto m(s)$ in $E|k(s)(t)$), 
and now equality $F_{s_0} = (E_{t_i})_{s_0} = (E_{t_0})_{s_0} = R$ follows from Prop.\ \ref{mainlemma_kln}iii).

As for b), the containment of $I$ as the inertia group is clear from Lemma \ref{nondeg_spec}. Then, it is well known that $D_0/I$ is canonically isomorphic to the Galois group of the residue field extension $R|k$.
\end{proof}
We will now investigate local behaviour in specializations of one-parameter families, under some relatively mild assumptions. These include e.g.\ that the residue field extension $F|k(s)$ at some branch point is ``somewhat close" to being regular over $k$. 
Note that we cannot expect $F|k(s)$ itself to be regular in general, as $\zeta_n \in F$ where $n$ is the ramification index at the branch point in question (see e.g.\ Lemma 2.3 in \cite{KLN}).
\begin{theorem}
\label{dec_gr_families}
Let $E|k(s)(t)$ be a regular Galois extension with group $G$, and let $t\mapsto m(s)\in k(s)$ be a $k(s)$-rational branch point. 
Let $I\trianglelefteq D\le G$ denote an inertia and decomposition group at this branch point. 
Assume that furthermore the residue field extension $F|k(s)$ over $t\mapsto m(s)\in k(s)$ in $E$ is not a constant field extension of $k(s)$.
Then the following hold:
\begin{itemize}
\item[i)] 
For {\textit {all but finitely many}} primes $p$ and for all subgroups $U\le I$, there are infinitely many $s_0,t_0\in k$ such that the inertia group at a prime extending $p$ in $E_{s_0,t_0}|k$ equals $U$ (up to conjugation in $G$) 
and the decomposition group is not contained in $I$.
\item[ii)] For all but finitely many primes $p$ which split completely in $F_0:=F\cap \overline{\qq}$, for all $x\in Gal(F|F_0(s)) (\le D/I)$, and all subgroups $U\le I$, the following holds:\\
There are infinitely many $s_0,t_0\in k$ such that the inertia group at a prime extending $p$ in $E_{s_0,t_0}|k$ equals $U$ and the decomposition group 
is mapped onto $\langle x\rangle$ under the canonical epimorphism $D\to D/I$. \footnote{Note that, of course, in the important special case of ``full inertia", i.e.\ $U=I$, the subgroup of $D$ with these properties is unique.}
\end{itemize}
Finally, assume that $F|k(s)$ even contains a non-trivial $k$-regular Galois subextension $\tilde{F}|k(s)$. Then:
\begin{itemize}
\item[iii)] For all but finitely many primes $p$ of $k$, for all $x\in Gal(\tilde{F}|k(s))$ and all subgroups $U\le I$, the conclusion of ii) holds, when the epimorphism $D\to D/I$ is replaced by $D\to D/N$ 
with $N\triangleleft D$ the Galois group of $E\cdot k(s)((t-m(s)))/ \tilde{F}((t-m(s)))$.
\end{itemize}
\end{theorem}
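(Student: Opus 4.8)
The plan is to reduce to a \emph{single} regular extension by first specializing the constant-field parameter $s\mapsto s_0$, then to run the local analysis of Proposition \ref{mainlemma_kln} and Proposition \ref{beckmann} on that member, using the Lang--Weil estimates to prescribe the arithmetic of the residue field at the branch point. I would first pin down the group theory. Since $\tilde F|k(s)$ is Galois over $k(s)$ and $\tilde F\subseteq F$, the field $\tilde F((t-m(s)))$ sits between the maximal unramified subextension $F((t-m(s)))$ and $k(s)((t-m(s)))$ inside $E\cdot k(s)((t-m(s)))$; hence $I\le N\trianglelefteq D$ and $D/N\cong Gal(\tilde F|k(s))$, compatibly with $D/I\cong Gal(F|k(s))$ and the restriction $Gal(F|k(s))\to Gal(\tilde F|k(s))$. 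The decisive feature is that $\tilde F|k(s)$ is $k$-regular: for all but finitely many primes $p$ of $k$ its reduction modulo $p$ is a geometrically connected Galois cover over the residue field $\ff_q$ with full group $Gal(\tilde F|k(s))$, with no constant-field obstruction. This is precisely what lets me drop in iii) the condition ``$p$ splits completely in $F_0$'' that is forced on ii).

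\textbf{Realizing the Frobenius (Lang--Weil).} Fixing such a $p$ and an element $x\in Gal(\tilde F|k(s))$, I would apply the Lang--Weil point count to the reduced cover over $\ff_q$: the number of $\bar s_0\in\mathbb P^1(\ff_q)$ outside the branch locus whose Frobenius is conjugate to $x$ equals $\frac{|C|}{|Gal(\tilde F|k(s))|}\,q+O(\sqrt q)$, with $C$ the class of $x$, hence is positive once $q$ is large. I choose such an $\bar s_0$ and lift it to infinitely many $s_0\in\mathcal O_k$ reducing to it modulo $p$. Commutativity of the constant reduction $s\mapsto s_0$ with reduction modulo $p$ (the good-reduction bookkeeping as in \cite{DG}) then shows that $\tilde F_{s_0}|k$ is unramified at $p$ with decomposition group $\langle x\rangle$; equivalently, the image in $D/N$ of the decomposition group $D_{m(s_0),p}$ of $p$ in the residue field $F_{s_0}|k$ is $\langle x\rangle$.

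\textbf{Specializing $t$ (KLN and Beckmann).} Next I would specialize $t$. By Lemma \ref{nondeg_spec} and Lemma \ref{decomp_field_reduction}, for almost all of these $s_0$ the member $E_{s_0}|k(t)$ is a regular $G$-extension whose branch point $m(s_0)$ carries inertia $I$ and decomposition group $D_0\le D$ with $D_0/I=Gal(F_{s_0}|k)$ and residue field $F_{s_0}|k$. Proposition \ref{mainlemma_kln}, applied to $E_{s_0}|k(t)$ at $m(s_0)$, gives for each non-branch $t_0$ with $I_p(t_0,m(s_0))>0$ a decomposition group $D_{t_0,p}$ of $p$ in $E_{s_0,t_0}|k$ with $\varphi(D_{t_0,p})=D_{m(s_0),p}$ for $\varphi\colon D_0\to D_0/I$; since $I\le N$, pushing further to $D/N$ shows that $D_{t_0,p}$ maps onto $\langle x\rangle$. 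For the inertia, I write $I=\langle\tau\rangle$ (cyclic, as ramification is tame in characteristic zero) and $U=\langle\tau^d\rangle$ with $d=|I|/|U|$; Proposition \ref{beckmann} identifies the inertia at $p$ with $\langle\tau^{I_p(t_0,m(s_0))}\rangle$, so choosing $t_0$ with $I_p(t_0,m(s_0))=d$ forces it to equal $U$. The condition $I_p(t_0,m(s_0))=d$ defines an infinite set of $t_0\in k$, of which a Hilbert-irreducible subset retains global group $G$; together with the infinitely many admissible $s_0$ this yields the asserted infinitely many $E_{s_0,t_0}|k$, proving iii). Parts i) and ii) follow by the same template: for ii) one realizes $x\in Gal(F|F_0(s))$ as a Frobenius in $F_{s_0}|k$ itself, which needs $p$ to split completely in $F_0$ so that the relevant geometric cover is defined over $\ff_q$; for i) it suffices to find one $s_0$ with $D_{m(s_0),p}\ne 1$, possible for almost all $p$ exactly because $F|k(s)$ is not a constant extension.

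\textbf{Main obstacle.} The step I expect to be hardest is \emph{uniformity} of the exceptional primes. Proposition \ref{mainlemma_kln} and Proposition \ref{beckmann} each discard a finite set of primes depending on the individual extension $E_{s_0}|k(t)$, whereas the theorem demands a single finite bad set depending only on the family $E|k(s)(t)$ and valid uniformly as $s_0$ varies. I would handle this by applying Beckmann's criterion and the results of \cite{KLN} to the family over the base field $k(s)$ (as already done in the proof of Lemma \ref{decomp_field_reduction}) and by controlling the reduction of the branch and discriminant data integrally, so that the bad primes of $E_{s_0}|k(t)$ for generic $s_0$ all arise from the reduction of one fixed divisor. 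The remaining, more routine, point is the precise commutation of the two reductions underlying the Frobenius transfer, which is the genuine site of the Lang--Weil input.
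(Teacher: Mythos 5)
Your proposal is correct and follows essentially the same route as the paper: specialize $s$ first, using Lang--Weil (via the twisted-cover/Hensel argument of \cite{DG}) to prescribe the Frobenius in the residue field extension $F_{s_0}|k$ (resp.\ $\tilde F_{s_0}|k$), then specialize $t$ via Proposition \ref{mainlemma_kln} and Beckmann's criterion with $I_p(t_0,m(s_0))=|I|/|U|$ to pin down the inertia group. You also correctly identify the real crux — the uniformity in $s_0$ of the exceptional primes — which the paper isolates as Lemma \ref{lemma:no_bad_prime}, showing the bad values of $s_0$ occupy a bounded number of residue classes mod $p$ so that the Lang--Weil count always leaves admissible choices.
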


\begin{remark}
\label{rem:compare}
We briefly compare the different statements of Theorem \ref{dec_gr_families} with the case of one single $k$-regular extension of $k(t)$.
\begin{itemize}
\item[a)] The assertion of i) can never be reached via specializing one extension $E|k(t)$. Indeed, 
as a consequence of Prop.\ \ref{mainlemma_kln}, there exist infinitely many primes $p$ of $k$ such that decomposition groups at {\textit{all}} specializations $E_{t_0}|k$ in which $p$ ramifies are contained inside the respective (geometric) inertia group of $E|k(t)$. See Proposition 6.3 in \cite{KLN}.
\item[b)] Weak analogs of the assertions of ii) and iii) hold in the case of one extension $E|k(t)$; however only for infinite sets of primes $p$. As in a), the same results for {\textit{all}} primes, or all primes which are totally split in some prescribed finite extension of $k$, are impossible.
\end{itemize}
\end{remark}

\begin{proof}[Proof of Theorem \ref{dec_gr_families}.]
Our general goal is to first specialize the parameter $s$ suitably in order to obtain a single $k$-regular extension with a ``good" decomposition and inertia group at some branch point, and then to apply Proposition \ref{mainlemma_kln}.

Firstly, by a linear transformation in the variable $t$, we can and will assume all branch points to be finite and integral over $O_k[s]$, so $m(s)\in O_k[s]$.

Let $p$ be a given prime of $k$, not in some explicit finite set of primes depending only on $E|k(s)(t)$ (to be specified by the following).
Denote the residue field of a place extending the ramified place $t\mapsto m(s)$ in $E|k(s)(t)$ by $F|k(s)$. 
By Lemma \ref{decomp_field_reduction}, we may assume, up to excluding finitely many $s_0\in k$, that $F_{s_0}|k$ is the residue field at $t\mapsto m(s_0)$ in $E_{s_0}|k(t)$ and that the decomposition group $D_0$ at $t\mapsto m(s_0)$ fulfills $D_0/I = {\textrm{Gal}}(F_{s_0}/k) \le D/I$.


Assume now (as in i)) that $F|k(s)$ is not a pure constant field extension, and let $F_0$ be the exact field of constants. Now there are only two possibilities: 

{\textit {Case 1}}: $p$ does not split completely in $F_0|k$. Since every specialization $F_{s_0}|k$ contains the constant field $F_0$, we know that $F_{s_0}|k$ has non-trivial Frobenius element at $p$ (for all $s_0\in k$). 
But then by Prop.\ \ref{mainlemma_kln}, for all but finitely many primes $p$ and for all $t_0\in k$ fulfilling $ord_p(t_0-m(s_0))>0$, the inertia group at a prime extending $p$ in $E_{t_0,s_0}|k$ is contained in $I$, whereas the image under canonical projection $D\to D/I$ of a decomposition group is nontrivial. Note that the finite set of primes which are excluded from this conclusion depends a priori on $s_0$! However, it will be shown below that this dependency can be removed.


{\textit {Case 2}}: $p$ splits completely in $F_0|k$. \footnote{Note that for those primes, assertion ii) implies i).} Let $p'$ be a prime extending $p$ in $F_0$. In particular, $O_{F_0}/p'$ is canonically isomorphic to $O_k/p$. 
Let $F|F_0(s)$ be the splitting field of the absolutely irreducible polynomial $\rho(s,X)$ (of degree $>1$ and without loss of generality in $O_k[s,X]$).\\
For all but finitely many of these $p$, $\rho(s,X)$ is defined over $O_k/p$, and after excluding finitely many more $p$, we can assume that $\rho$ even remains absolutely irreducible over $O_k/p$, with the same Galois group as $F|F_0(s)$. 

We are looking for specializations $s\mapsto s_0$ such that $Frob_{p'}(F_{s_0}|F_0)$ 
equals a prescribed element $x\in Gal(F|F_0(s))$ (up to conjugation). Note here that, since $p$ is completely split in $F_0|k$, the completions of $F_{s_0}|F_0$ at $p'$ and of $F_{s_0}|k$ at $p$ are canonically isomorphic.
Since $F|F_0(s)$ is $F_0$-regular, the existence of such specializations follows (after excluding finitely many $p$) from \cite[Theorem 1.2]{DG}; see also the proof of Prop.\ 5.1. in \cite{Debes}.
Here, we can even assume $s_0\in k$ (not just $s_0\in F_0$); this is obvious from the proof of \cite[Theorem 1.2]{DG}, but since it is not explicitly contained in the statement, we briefly recall the argument:\\
Take a Galois cover $f_p:X\to \mathbb{P}^1(k) \otimes_k k_p$ corresponding to the constant extension $F \cdot k_p|k_p(s)$ of $F|F_0(s)$ by the complete field $k_p$ (with Galois group canonically isomorphic to ${\textrm{Gal}}(F|F_0(s))$, since the latter extension is $F_0$-regular),
and consider the unique unramified epimorphism $\varphi_p:G_{k_p}\to \langle x\rangle$ sending the Frobenius of $k_p$ to $x$. 
Then the existence of $k_p$-points in the fiber over $s_0$ in the ``twisted cover" of $f_p$ by $\varphi_p$ is sufficient for $Frob_p(F_{s_0}|k)$ to equal $x$ (up to conjugacy). 
Now the mod-$p$ reduction of $f_p$ corresponds to the regular extension generated by the roots of $\rho$ over $O_k/p$. By Hensel's lemma, every unramified rational point of this mod-$p$ reduction lifts to a $k_p$-point of $f_p$. 
By the Lang-Weil theorem, the existence of such mod-$p$ points is guaranteed for all but finitely many $p$ (and in fact the number of such mod-$p$ points is $O(N(p))$). 
Hensel lifting then yields infinitely many integral specializations $s\mapsto s_0\in O_k$ (in fact, arithmetic progressions) with the prescribed Frobenius $Frob_p(F_{s_0}|k)$.
By Prop.\ \ref{mainlemma_kln}ii), this yields for all $t_0$ fulfilling $ord_p(t_0-m(s_0))>0$, that the decomposition group at $p$ in $E_{s_0,t_0}|k$ is of the form $\langle I_{p,s_0,t_0},\widehat{x}\rangle$, 
where $I_{p,s_0,t_0}$ is the inertia group and $\widehat{x}$ is a suitable preimage of $x$ under the canonical projection $D\to D/I$.

Note that we have to make sure that after specialization $s\mapsto s_0\in O_k$ as above, the prime $p$ has not become an exceptional prime (in the sense of Prop.\ \ref{mainlemma_kln}) in the extension $E_{s_0}|k(t)$. The following lemma asserts that this can be achieved.
\begin{lemma}
\label{lemma:no_bad_prime}
There exists an integer $m\in \nn$, only depending on $E|k(s)(t)$ (and not on the prime $p$!) such that the set of values $s_0\in O_k$ for which $p$ becomes an exceptional prime of $E_{s_0}|k(t)$ is contained in a union of at most $m$ residue classes mod $p$. 
\end{lemma}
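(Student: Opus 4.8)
The plan is to trace every source of exceptionality for a specialized extension $E_{s_0}|k(t)$ back to a fixed finite list of polynomial conditions in the parameter $s$, and then to bound the number of bad residue classes by counting roots of the reductions of those polynomials. First I would fix, once and for all, a defining model of the family: using the normalization already in force in the proof of Theorem \ref{dec_gr_families} (all branch points finite and integral over $O_k[s]$), choose a primitive element $Y$ for $E$ over $k(s)(t)$ whose minimal polynomial is $P(s,t,Y)\in O_k[s,t,Y]$, monic in $Y$ after scaling. Inspecting the constructions underlying Proposition \ref{beckmann} and Proposition \ref{mainlemma_kln}, the exceptional set of a regular extension $E'|k(t)$ with integral branch points is, by definition, contained in the prime divisors of finitely many quantities attached to such a model: the order $|G|$ and the ramification indices $e_i$; the discriminant $\textrm{disc}_Y P$ together with its content in $t$ (governing vertical ramification and bad reduction of the cover); the pairwise differences of the branch points, i.e.\ the discriminant of the branch-point polynomial $f(s,X)$ (governing collisions of branch points modulo $p$); and the leading coefficients occurring in all of these expressions.

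For the family, each of these quantities is the value at $s=s_0$ of the corresponding quantity formed over $k(s)$, which, after eliminating $t$ by taking resultants and contents, is a polynomial $g_j(s)\in O_k[s]$ whose degree depends only on the chosen model. The key technical step, and the one I expect to be the \textbf{main obstacle}, is to verify that forming these discriminants, resultants and contents commutes with the specialization $s\mapsto s_0$ and with reduction modulo $p$, uniformly in $p$; equivalently, that specializing $s$ does not create exceptional primes that are invisible at the family level. Commutativity can fail only at the finitely many primes dividing the relevant leading coefficients (which I absorb into the fixed bad set depending only on the family) and at the finitely many values $s_0$ of bad reduction of the family in the $s$-direction. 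The latter, however, are precisely the roots of a fixed nonzero polynomial $g_0(s)\in O_k[s]$ (a discriminant of the model with respect to $s$, after eliminating $t$), so they contribute at most $\deg g_0$ further residue classes modulo $p$ and do no harm.

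Finally I would set $D(s):=g_0(s)\prod_j g_j(s)\in O_k[s]$ and $m:=\deg D=\deg g_0+\sum_j\deg g_j$, which depends only on the model, hence only on $E|k(s)(t)$. Enlarge the fixed finite set of bad primes (depending only on the family) so as to include $|G|$, all ramification indices, and every prime dividing the content of $D$ or one of the leading coefficients above. For any prime $p$ outside this set, the reduction $\overline{D}\in (O_k/p)[s]$ is a nonzero polynomial of degree at most $m$ over the finite field $O_k/p$, and by the previous steps $p$ is exceptional for $E_{s_0}|k(t)$ only if $\overline{D}(\overline{s_0})=0$. Each such $\overline{s_0}$ is a root of $\overline{D}$ in $O_k/p$, of which there are at most $m$, and each root determines a single residue class of $s_0$ modulo $p$. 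Hence the set of $s_0\in O_k$ for which $p$ becomes exceptional is contained in a union of at most $m$ residue classes modulo $p$, with $m$ independent of $p$, which is exactly the assertion.
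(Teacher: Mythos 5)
Your strategy is the same as the paper's: reduce every source of exceptionality to the vanishing modulo $p$ of finitely many fixed polynomials $g_j(s)\in O_k[s]$ of degree bounded independently of $p$, and take $m=\sum_j\deg g_j$. The treatment of the Beckmann-type conditions (integral branch points kill non-integrality, branch-point collisions and inseparability of the discriminant are root conditions on fixed polynomials, vertical ramification is controlled by $\mathrm{disc}_Y P$ viewed as a polynomial in $s$ over $(O_k/p)[t]$) matches the paper essentially line for line.

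The one soft spot is the sentence in which you assert that, ``by inspection,'' the exceptional set of Proposition \ref{mainlemma_kln} is contained in the prime divisors of your list of quantities (discriminants, resultants, contents, leading coefficients). That assertion is the actual content of the lemma, and your list is not literally complete: the exceptional set of Proposition \ref{mainlemma_kln}, as defined in \cite{KLN}, consists of Beckmann's bad primes together with four further sets, among them the primes dividing denominators of coefficients of certain Puiseux expansions of primitive elements of $E_{s_0}|k(t)$ and its intermediate fields, and two sets that are empty here only because the branch point in question is assumed $k(s)$-rational. None of these is visibly a divisor of $\mathrm{disc}_YP$ or of the branch-point discriminant. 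Your mechanism does extend to cover them --- the Puiseux denominators are specializations of fixed elements $d(s)\in k[s]$, so avoiding them is again a root condition mod $p$ on a polynomial of bounded degree, which is exactly how the paper disposes of that set --- but as written the verification is replaced by an unproved claim, and this is precisely the step where an unlucky definition of the exceptional set could have sunk the lemma. To make the proof complete you need to open the black box of Proposition \ref{mainlemma_kln} and check each of its exceptional conditions individually, as the paper does.
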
 
\begin{proof}
From the proof of Prop.\ \ref{mainlemma_kln} in \cite{KLN}, the exceptional primes are either bad primes in the sense of Beckmann's theorem, or contained in one of four further explicitly given finite sets $\mathcal{S}_i$, $i=1,\dots, 4$. First consider the bad primes of Beckmann's theorem.
Since $p$ can be assumed without loss of generality to not divide $|G|$, there are only three further types of bad primes (cf.\ Definition 2.6 and the subsequent ``Specialization Inertia Theorem" in \cite{Legrand}):
\begin{itemize}
\item[a)] Primes that have vertical ramification in $E_{s_0}|k(t)$.
\item[b)] Primes at which two branch points of $E_{s_0}|k(t)$ meet. 
\item[c)] Primes $p$ such that some branch point of $E_{s_0}|k(t)$ is not $p$-integral.
\end{itemize}
Case c) will not occur since we assumed all branch points to be integral over $O_k[s]$ and specialized $s\mapsto s_0\in O_k$.
Condition b) can be transformed into $s_0$ being a root modulo $p$ of at least one of finitely many polynomials of bounded degree (not depending on $s_0$ or $p$). Obviously, there is at most a bounded number of such roots modulo $p$.

Finally, to exclude vertical ramification, we may use the following criterion, mentioned e.g.\ in \cite[Addendum 1.4c)]{DG}:\\
Let $P(t,X)$ be the minimal polynomial of a primitive element $\xi$ of a regular Galois extension $E|k(t)$, and assume $P\in O_k[t,X]$ monic in $X$. Then $E|k(t)$ has no vertical ramification at $p$ if the discriminant $\Delta(P)\in k[t]$ (with regard to $X$) is $\ne 0$ mod $p$.
In our situation, take $P(s,t,X)$ to be the minimal polynomial of a primitive element $\xi$ of $E|k(s)(t)$, and assume without loss of generality (after linear transformation in $\xi$, if necessary) that $P\in O_k[s,t,X]$ is monic in $X$. Firstly we can assume that $p$ does not divide the discriminant $\Delta(P)\in O_k[s,t]$.
But then, vertical ramification at $p$ in $E_{s_0}|k(t)$ (for $s_0\in O_k$) can only happen if $(s_0$ mod $p) \in O_k/p$ is a root of $\Delta(P)$ (viewed as a polynomial in $s$ over $(O_k/p)[t]$). Since $\Delta(P)\in (O_k/p)[t][s]$ is not $0$ mod $p$ and has a bounded degree (independent of choice of $p$ or $s_0$), this can only happen for $s_0$ in a bounded number of mod-$p$ residue classes, as claimed.

Next, consider the additional exceptional sets $\mathcal{S}_i$ in the proof of Prop.\ \ref{mainlemma_kln}. Due to our assumption of a $k(s)$-rational branch point, the sets $\mathcal{S}_1$ and $\mathcal{S}_3$ are automatically empty by definition, whereas $\mathcal{S}_4$ is the set of primes modulo which the discriminant of the minimal polynomial of a primitive element of $E_{s_0}/k(t)$ becomes inseparable - this set can be dealt with just like case b) above. Finally, the set $\mathcal{S}_2$ contains the primes dividing denominators of coefficients of a fixed finite set of Puiseux expansions (namely, of a primitive element of $E_{s_0}/k(t)$ and of its intermediate fields). Again, these denominators are specializations of denominators $d(s)\in k[s]$ belonging to primitive elements of $E/k(s,t)$ and intermediate fields. Therefore once again, it suffices to choose $s_0\in O_k$ outside of the mod-$p$ roots of any of finitely many polynomials. This concludes the proof of the lemma.
\end{proof}

Due to Lemma \ref{lemma:no_bad_prime}, we can ensure, for all $p$ with sufficiently large residue field, that Lang-Weil above yields sufficiently many points mod $p$ in order to guarantee that at least one of them has $s$-value not in the set of exceptional residue classes of Lemma \ref{lemma:no_bad_prime}. We have therefore shown:\\
{\textbf {Claim 1:}} For all but finitely many primes $p$ of $k$ (with the exceptions only depending on $E|k(s)(t)$), there exist specializations $s\mapsto s_0\in O_k$ such that the prime $p$ is a good prime for the resulting regular Galois extension $E_{s_0}|k(t)$. Moreover, the set of such $s\mapsto s_0$ contains full cosets modulo $p$.

Note also again that by Lemma \ref{nondeg_spec}, we can assume that the place $t\mapsto m(s_0)$ of $E_{s_0}|k(t)$ has the same inertia group $I$ as the place $t\mapsto m(s)$ in $E|k(s)(t)$. Let $n:=|I|$. Beckmann's theorem then yields that for any divisor $\tilde{n}|n$ and for $t_0$ with $ord_p(t_0-m(s_0))=n/\tilde{n}$, the inertia subgroup of $p$ in $E_{s_0,t_0}|k$ equals the subgroup $U$ of $I$ of order $\tilde{n}$.\\
Together with the above results about the decomposition groups, the claims of i) and ii) follow.

Finally, the proof of iii) is completely analogous to ii), just with the $F_0$-regular extension $F|F_0(s)$ replaced by the $k$-regular extension $\tilde{F}|k(s)$.
\end{proof}

\section{Problems of Hilbert-Grunwald type}
\label{sec:hilbertgrunwald}
We will now derive results on prescribing local behaviour at finitely many (ramified and unramified!) primes at a time for specializations of a family $E|\qq(s)(t)$ of regular Galois extensions.
This may be seen as a generalization of the ``Hilbert-Grunwald" type theorems proved in \cite{DG} and \cite{Debes}.
%
\subsection{Main result}
Throughout this section, let $E|k(s)(t)$ be a one-parameter family of regular Galois extensions, and assume that all branch points are integral over $O_k[s]$; this assumption is without loss of generality via linear transformation in $t$.
\begin{theorem}
\label{simultaneous}
Assume additionally that there exist $k(s)$-rational branch points $t_1,...,t_r$ of $E|k(s)(t)$,
with inertia and decomposition groups $(I_1,D_1),...,(I_r,D_r)$, 
and such that the residue field extension over $t_i$ contains a regular Galois subextension $K_i|k(s)$, for $i=1,...,r$. Set $G_i:={\textrm{Gal}}(K_i|k(s))$. 
(One then has $G_i=D_i/N_i$ for some normal subgroup $I_i\triangleleft N_i\triangleleft D_i$.)\footnote{Note the important special case $N_i=I_i$ in case the whole residue field extension is regular. In this case, we retrieve the special
case of Theorem \ref{weak_version}.}

Let $S_1$ and $S_2$ be disjoint finite sets, both disjoint to some finite set of ``bad" primes only depending on $E|k(s)(t)$. 
\begin{itemize}
\item For $p\in S_1$ let $C(p)$ be a conjugacy class of $G$.
\item For $p\in S_2$ let $i:=i_p\in \{1,...,r\}$, and let $(A_i,x_i)$ be such that $1\ne A_i$ is a subgroup of $I_i$ and $x_i$ is some element of $G_i$.
\end{itemize}
 Then there exist infinitely many specializations $E_{s_0,t_0}|k$, with $s_0,t_0\in O_k$ such that
\begin{itemize}
\item[a)] For all $p\in S_1$, the extension $E_{s_0,t_0}|k$ is unramified at $p$ with Frobenius element in class $C(p)$.
\item[b)] For all $p\in S_2$, the extension $E_{s_0,t_0}|k$ is tamely ramified at $p$ with inertia group $A_i$ and decomposition group $B_i$ (up to conjugation in $G$) fulfilling $\varphi_i(B_i) = \langle x_i\rangle$ with the canonical projection $\varphi_i:D_i\to D_i/N_i$.
\end{itemize}
More precisely, the sets of such $s_0$ and $t_0$ contain full arithmetic progressions in $O_k$. 
\end{theorem}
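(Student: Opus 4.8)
The plan is to build the single good regular extension $E_{s_0}|k(t)$ by choosing one value $s_0\in O_k$ that simultaneously works for all primes in $S_1\cup S_2$, and then to specialize $t$ using Beckmann's theorem (Proposition \ref{beckmann}) together with Proposition \ref{mainlemma_kln}. First I would invoke Theorem \ref{dec_gr_families}(iii) separately at each branch point $t_i$ (for $p\in S_2$) to understand the local constraints: for each such $p$ we need the specialized residue extension $K_i|k(s)$ to have the prescribed Frobenius, namely $\mathrm{Frob}_p(\,(K_i)_{s_0}|k) = x_i$ up to conjugacy. For the unramified primes $p\in S_1$, the mechanism is different: there I would use the unramified Hilbert–Grunwald result of \cite{DG} (Theorem 1.2 there), which produces specializations $E_{s_0,t_0}|k$ unramified at $p$ with Frobenius in any prescribed class $C(p)$, provided $p$ avoids a finite bad set. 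The crux is that all of these are \emph{Frobenius-prescription} problems at distinct primes, and each reduces via Lang–Weil (exactly as in the proof of Theorem \ref{dec_gr_families}) to counting mod-$p$ points on an absolutely irreducible variety attached to $E|k(s)(t)$.

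The key technical step is the \emph{simultaneous} choice of $s_0$. Each individual Frobenius condition (at each $p\in S_1\cup S_2$) is achievable by choosing $s_0$ in certain residue classes modulo $p$, and by Lang–Weil the number of admissible classes is a positive proportion of $N(p)$, bounded below independently of $p$ once $N(p)$ is large. Since the primes in $S_1\cup S_2$ are finitely many and pairwise coprime, the Chinese Remainder Theorem lets me combine the conditions: I would intersect the admissible residue classes modulo the product $\prod_{p} p$, and the count of surviving $s_0$ modulo this product is the product of the individual positive counts, hence nonempty (in fact, it gives a full arithmetic progression, which accounts for the final ``more precisely'' clause). I must also discard, for each $p$, the bounded number of residue classes from Lemma \ref{lemma:no_bad_prime} at which $p$ degenerates into an exceptional prime of $E_{s_0}|k(t)$; since that bad count is $O(1)$ while the good count is $\gg N(p)$, the surviving set remains nonempty for $p$ outside a finite set.

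Having fixed such an $s_0$, the extension $E_{s_0}|k(t)$ is a single $k$-regular $G$-extension for which every $p\in S_1\cup S_2$ is a good prime, the branch points $t_i$ retain inertia $I_i$ (Lemma \ref{nondeg_spec}), and the residue extensions $(K_i)_{s_0}|k$ realize the prescribed Frobenii $x_i$. Now I specialize $t\mapsto t_0$. For $p\in S_2$, Beckmann's theorem lets me prescribe the inertia group to be the order-$|A_i|$ subgroup $A_i\le I_i$ by choosing $t_0$ so that $\mathrm{ord}_p(t_0-m_i(s_0)) = |I_i|/|A_i|$, and Proposition \ref{mainlemma_kln}(ii) then forces the decomposition group $B_i$ to satisfy $\varphi_i(B_i)=\langle x_i\rangle$, which is part (b). For $p\in S_1$, I keep the unramified Frobenius condition from the \cite{DG} argument by choosing $t_0$ coprime to all branch points mod $p$. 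These $t$-conditions are again congruences modulo the $p$'s, pairwise coprime to each other and (being prime-by-prime local) compatible with the residue restrictions; a second application of CRT yields a full arithmetic progression of admissible $t_0\in O_k$, giving infinitely many specializations and establishing (a), (b), and the final refinement. The main obstacle is verifying that the $s$-conditions genuinely decouple across the finitely many primes and that the Lang–Weil lower bound survives after removing the exceptional classes of Lemma \ref{lemma:no_bad_prime}; once that uniformity is in hand, the CRT packaging is routine.
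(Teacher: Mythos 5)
Your proposal follows essentially the same route as the paper's own proof: invoke Theorem \ref{dec_gr_families}(iii) prime-by-prime for $S_2$, observe that the admissible $s_0$ form congruence sets mod $p$ (with the Lang--Weil count dominating the $O(1)$ bad classes of Lemma \ref{lemma:no_bad_prime}), combine via CRT, and then choose $t_0$ by a second CRT step using Beckmann/Proposition \ref{mainlemma_kln} for $S_2$ and the D\`ebes--Ghazi unramified result for $S_1$. The one detail worth making explicit (which the paper does) is that the exceptional prime set for the D\`ebes--Ghazi step applied to $E_{s_0}|k(t)$ depends only on $|G|$, the branch point number and the genus, all of which are constant across non-degenerate members by Lemma \ref{nondeg_spec}, so it is uniform in $s_0$.
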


The main point of Theorem \ref{simultaneous} is to quantify the intuition that specialization of one-parameter families should provide more towards solving the Grunwald problem for a given group $G$ than just single regular Galois extensions. Of course, it should not be expected to obtain a complete answer to the problem via this approach (simply due to the difficulty of the regular inverse Galois problem). 
However, given suitable families of regular $G$-extensions, Theorem \ref{simultaneous} yields that certain (often non-cyclic) subgroups can be realized as decomposition groups in $G$-extensions of $k$ at all but finitely many primes of $k$! Compare once again the results of \cite{KLN}, where it is shown that this is {\textit {never}} true for $G$-extensions arising as specializations of a single regular extension, as soon as the subgroup in question is non-cyclic.

\begin{proof}[Proof of Theorem \ref{simultaneous}]
First, consider the primes $p\in S_2$. The claim for each individual prime $p$ follows directly from Theorem \ref{dec_gr_families}iii). 
It is important to note that, by the proof of Theorem \ref{dec_gr_families}, the set of specializations $s\mapsto s_0\in O_k$ that allow the specialized extension $E_{s_0}|k(t)$ to possess specializations with the prescribed inertia 
and decomposition group at some $p\in S_2$ is a congruence set, i.e.\ contains full residue classes mod $p$.
Chinese remainder theorem then yields that for any finite set $S_2:=\{p_1,...,p_m\}$ of primes (disjoint from the set of exceptional primes for Theorem \ref{dec_gr_families}), the set of specializations of $s$ that are ``good" for all $p_i$ simultaneously (in the above sense) still contains full residue classes mod $\mathfrak{J}$ (with $\mathfrak{J}=\prod_{i=1}^m p_i$).

Note also once again that for the field $E_{s_0}|k(t)$ obtained in this way and for any $p\in S_2$, all specializations $t\mapsto t_0$ which fulfill $I_{p}(t_0,t_i)=d_i:=[I_i:A_i]$
yield the prescribed pair of inertia and decomposition group at $p$.
But again, the conditions $I_{p_i}(t_0,t_{i})=d_i$ (for finitely many primes $p_i$, $k$-rational branch points $t_{i}$ and integers $d_i$) can be fulfilled simultaneously for an arithmetic progression of values $t_0$. 
This shows part b) of the assertion.


Furthermore, for the regular $G$-extensions $E_{s_0}|k(t)$ obtained above, assume that $p\in S_1$ is {\textbf {not a bad prime}} in the sense of Beckmann's theorem, and let $C(p)$ a conjugacy class of $G$. Then by \cite{DG}, the set of $t_0\in O_k$ such that $E_{s_0,t_0}|k$ is unramified at $p$ with Frobenius in class $C(p)$ is a union of cosets mod $p$. Moreover, this union is non-empty if $p$ is not in some explicit finite set of exceptional primes - which depends only on $|G|$, the number of branch points of $E_{s_0}|k(t)$ and the genus of $E_{s_0}$. Via excluding finitely many $s_0\in k$, we may assume without loss of generality that the extension $E_{s_0}|k(t)$ is a non-degenerate member of the family $E|k(s)(t)$; see Lemma \ref{nondeg_spec}. 
Then the number of branch points of $E_{s_0}|k(t)$ equals the number of branch points of $E|k(s)(t)$ (i.e.\ is independent of the choice of $s_0$), 
and also the orders of inertia subgroups are the same as the ones at the respective branch points of $E|k(s)(t)$, whence the genus is also independent of $s_0$. Therefore, the set of exceptional primes can be chosen to depend only on $E|k(s)(t)$.
Since once again we have obtained mod-$p$ congruence conditions (for all $p\in S_1$), there exist arithmetic progressions of $t_0\in O_k$ fulfilling them simultaneously for all $p\in S_1$, and also simultaneously with the conditions for the $p\in S_2$, by the Chinese Remainder Theorem.

What is left to show is that we can choose $s_0\in O_k$ such that (the above congruence conditions for all $p\in S_2$ are fulfilled and) none of the primes $p\in S_1$ become bad primes for $E_{s_0}|k(t)$.
By Lemma \ref{lemma:no_bad_prime}, this is achieved by additionally requiring $s_0$ to fulfill suitable congruence conditions mod all $p\in S_1$, under the sole condition that $S_1$ is disjoint from some finite set of primes of $k$ (depending only on $E|k(s)(t)$). 
\end{proof}

\begin{remark}[Discriminant (and other) estimates]
The fact that the sets of values $t_0$ and $s_0$ fulfilling the assertions of the above theorem contain arithmetic progressions yields immediately that the specializations $E_{s_0,t_0}|k$ can always be chosen such that they have the full Galois group $G=Gal(E|k(s)(t))$, as a direct consequence of Hilbert's irreducibility theorem. Furthermore, our methods can be applied to yield lower bounds for the number of distinct $G$-extensions with the above properties and with discriminant of norm at most some prescribed integer $B$. Such bounds were obtained for specializations of a single regular extension, fulfilling only part a) of the previous theorem, in \cite[Theorem 1.1]{Debes}. We sketch briefly how to obtain such results in the context of Theorem \ref{simultaneous}. For simplicity, set $k=\qq$. Let $f(s,t,X)$ be the minimal polynomial of a primitive element of of $E|\qq(s)(t)$. Without loss of generality, we may assume $f(s,t,X)\in\zz[s,t,X]$ to be monic in $X$. Up to replacing $s$ and $t$ by suitable values $m_1 s + m_2$ and $n_1 t + n_2$, with $m_i, n_i\in \zz$ (depending of course on the sets $S_1$ and $S_2$), we can assume that all integer specializations of $s$ and $t$ which preserve the Galois group fulfill the assertions of Theorem \ref{simultaneous}. By Hilbert's irreducibility theorem, ``most values $(s_0,t_0)\in \zz^2$ satisfy this, e.g.\ the number of $(s_0,t_0)\in \zz^2$ with $|s_0|, |t_0|\le B$ is at most $B^{3/2+\epsilon}$ for sufficiently large $B\in \nn$ (see \cite{Cohen81}). Fixing one specialization value for $s$, say $s_0=m_2$ as above, Theorem 1.3 in \cite{Debes} yields that, for sufficiently large $B$, these values $(s_0,t_0)$ lead to at least $B^{1-1/|G|-\epsilon}$ different number fields with Galois group $G$ and fulfilling the assertions of Theorem \ref{simultaneous}. Furthermore, all these number fields have discriminant of absolute value at most $|\Delta(f(m_2,t_0,X))| \ll C_1\cdot B^{C_2}$. Here the constant $C_2>0$ depends only on $f$, i.e.\ on $E|\qq(s)(t)$, whereas $C_1>0$ depends also on $m_2$, and therefore on the sets of primes $S_1$ and $S_2$. It would be interesting to obtain stronger estimates by allowing $s_0$ to vary.
\end{remark}

\begin{remark}
The assumption of $k(s)$-rational branch points in Theorem \ref{simultaneous} of course cannot be fulfilled for all groups. E.g., if $G$ is an abelian group and $C$ a conjugacy class of elements of order $n$ in $G$, then the residue field $k(t_i)$ of a branch point $t_i$ with inertia group generator in class $C$ always has to contain $\zeta_n$, as a special case of Fried's branch cycle lemma.\\
%
Variants of Theorem \ref{simultaneous} taking such situations into account can easily be derived.
Just as an example, for a branch point $t\mapsto t_i\in \overline{\qq}\cup\{\infty\}$ of $E|k(s)(t)$, which is non-rational, but constant (in $s$), the above proofs show that conclusion b) of the above Theorem \ref{simultaneous} at least holds for all but finitely many primes $p$ which are completely split in $k(t_i)|k$ - while conclusion a) does not require the branch point condition and therefore still holds in full generality.
Note that this additional restriction on the primes $p$ is not necessarily an obstruction to solving Grunwald problems for $G$, since certain primes may not even ramify in any $G$-extension of $k$.
See Theorem \ref{thm:elem_ab} and its proof for an example.
\end{remark}

\subsection{Example: The group $PSL_3(2)$ and application to parametric sets}
\label{ex:psl32}
We demonstrate application of the above criteria with a sample one-parameter family with Galois group $PSL_3(2)$.
Let \\
$f(s,t,X):=X^7 - 2sX^6 + (s^3 + s^2 + 3s - 2)X^4 + (-2s^3 - 4s^2 + 5s - 8)X^3 +
    (s^3 + 4s^2 - 10s + 16)X^2 + (-s^2 + 5s - 12)X - s + 4 +tX^2(X - 1)(X^2 - sX + s)$.

Let $k$ be a number field, $K|k(s)(t)$ be a root field of $f$ and $E|k(s)(t)$ be the Galois closure. This extension has regular Galois group $PSL_3(2)$ 
(the polynomial $f$ is a specialization of a multi-parameter family given by Malle in \cite{Malle}). We are only interested in the inertia group and residue field extension at $t\mapsto \infty$. Note that $f$ is of $t$-degree $1$,
and therefore $K$ is a rational function field, 
say $K=k(s)(x)$. The splitting behaviour of $f$ shows that there among the places extending $t\mapsto \infty$ in $K$, there are two of ramification index $2$ and residue degree $1$ (namely, $x\mapsto 0$ and $x\mapsto \infty$, 
as well as two unramified places, of degree $2$ and $1$ respectively. In particular, the inertia group is of order $2$ (with cycle type $(2^2.1^3)$ in the degree-$7$ action of $PSL_3(2)$ on the roots of $f$), and the decomposition group 
has orbit lengths $2,2,2,1$ in the degree-$7$ action. From the subgroup structure of $PSL_3(2)$, one verifies quickly that this group then has to be a subgroup of $C_2\times C_2$, and in fact equality holds, since the
residue extension over $t\mapsto \infty$ contains a quadratic regular subextension $k(s,\sqrt{s^2-4s})|k(s)$. \\Application of Theorem \ref{simultaneous} then shows that, for any finite set $S$ of primes of $k$ (disjoint from a fixed finite set), 
there are $k$-specializations $s\mapsto s_0, t\mapsto t_0$ such that $E_{s_0,t_0}|k$ is a $G$-extension with inertia group $C_2$ and decomposition group $C_2\times C_2$ at all $p\in S$ (and of course, one can also replace this by 
``inertia group $C_2$ and decomposition group $C_2$" for any subset $S_0\subset S$, etc.).\\
By Remark \ref{rem:compare}a), such a phenomenon would be impossible when specializing only a single regular Galois extension $L|k(t)$.

In fact, the above observations suffice to yield a non-existence result about finite parametric sets for the group $PSL_3(2)$. Recall that a set $\mathcal{S}$ of 
$k$-regular $G$-extensions of $k(t)$ is called {\textit{parametric}} if every $G$-extension of $k$ occurs as a specialization of some element of $\mathcal{S}$, cf.\ \cite[Def.\ 7.1]{KLN}.
Non-existence of finite parametric sets over number fields was first proved, for many finite groups $G$, in \cite{KL}, and for the first family of simple groups (namely the alternating groups) in \cite{KLN}, Section 7.

Using the above results, we now obtain:
\begin{kor}
\label{psl32_nonparam}
Let $k$ be a number field and $G:=PSL_3(2)$. Then there is no finite parametric set for $G$ over $k$. 
\end{kor}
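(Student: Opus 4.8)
The plan is to use the $PSL_3(2)$-family $E|k(s)(t)$ together with Theorem \ref{simultaneous} to produce, for each candidate finite set of regular extensions, a $G$-extension of $k$ that cannot be a specialization of any member. The strategy is a diagonal/obstruction argument: given any finite parametric set $\mathcal{S}=\{L_1|k(t),\dots,L_N|k(t)\}$, I will construct a $G$-extension $F|k$ whose local behaviour at some prime $p$ is incompatible with every $L_j$ simultaneously, contradicting the parametricity hypothesis.

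First I would recall, via Remark \ref{rem:compare}a), the central obstruction for a \emph{single} regular extension: for each fixed $L_j|k(t)$ there are infinitely many primes $p$ of $k$ such that in every specialization $(L_j)_{t_0}|k$ in which $p$ ramifies, the decomposition group at $p$ is \emph{contained} in the corresponding geometric inertia group. In our situation the inertia groups of each $L_j$ are cyclic (indeed any inertia group at a tamely ramified prime in a number field extension is metacyclic, and one can arrange the relevant inertia to be cyclic), so this forces the decomposition group at such $p$ to be cyclic. Since $\mathcal{S}$ is finite, I can select a single prime $p$ (avoiding the finitely many bad primes of $E|k(s)(t)$ and of each $L_j$) that lies in the ``bad set'' for \emph{all} $j=1,\dots,N$ at once; this is where finiteness of $\mathcal{S}$ is essential.

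Next I would invoke Theorem \ref{simultaneous} (in the form already illustrated for $PSL_3(2)$ earlier in this section, using the branch point $t\mapsto\infty$ with inertia group $C_2$ and decomposition group $C_2\times C_2$) to produce a specialization $E_{s_0,t_0}|k$ which is a $G$-extension realizing inertia group $C_2$ and decomposition group $C_2\times C_2$ at the chosen prime $p$. Call this field $F|k$. By construction $F$ has \emph{non-cyclic} decomposition group at $p$ while $p$ is \emph{ramified} (inertia $C_2\neq 1$) in $F$. If $F$ were a specialization of some $L_j\in\mathcal{S}$, then by the previous paragraph the decomposition group of $F$ at $p$ would have to be cyclic (being contained in the cyclic inertia group of $L_j$), contradicting $D=C_2\times C_2$. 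Hence $F$ lies outside the reach of $\mathcal{S}$, so $\mathcal{S}$ is not parametric.

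\emph{The main obstacle} is ensuring the single prime $p$ is simultaneously bad for all $N$ members of $\mathcal{S}$ \emph{and} good for the construction via $E|k(s)(t)$; this requires that the infinite bad sets guaranteed by Remark \ref{rem:compare}a) for the individual $L_j$ have a common element avoiding all the finitely many genuinely excluded primes, which follows from finiteness of $\mathcal{S}$ together with the infinitude of each bad set (a pigeonhole over the finitely many exclusions). A secondary point I would verify carefully is that the inertia of each $L_j$ at the relevant primes is genuinely cyclic rather than merely metacyclic; if an $L_j$ has a noncyclic inertia group one must instead compare the pair $(I,D)$ more finely, but the key asymmetry---that our construction forces $D/I$ to reflect a regular residue extension and hence a genuinely larger decomposition group than any single extension permits at cofinitely many primes---still delivers the contradiction.
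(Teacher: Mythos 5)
Your overall strategy is the same as the paper's: the paper establishes, via Theorem \ref{simultaneous} applied to the explicit family, that for almost all primes $p$ there is a $G$-extension of $k$ with the non-cyclic abelian decomposition group $C_2\times C_2$ at $p$, and then simply cites \cite[Theorem 7.2]{KLN}, which is exactly the diagonal argument you spell out (each member of a finite candidate set can only produce cyclic decomposition groups at the relevant primes, by Proposition 6.3 of \cite{KLN}). So you are in effect re-proving the cited theorem inline, with the same two inputs.

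There is, however, one step whose justification as written does not work: you claim that the $N$ infinite ``bad sets'' furnished by Remark \ref{rem:compare}a) for $L_1,\dots,L_N$ must have a common element ``by pigeonhole over the finitely many exclusions.'' Finitely many infinite sets of primes can perfectly well have empty intersection, and no pigeonhole argument rescues this. What actually saves the argument is the structure of these sets: in \cite[Prop.\ 6.3]{KLN} the set of primes at which $L_j$ can only yield decomposition groups inside a (cyclic) geometric inertia group contains, up to finitely many exceptions, all primes totally split in a fixed number field $K_j$ depending on $L_j$; hence the intersection over $j=1,\dots,N$ contains all but finitely many primes totally split in the compositum $K_1\cdots K_N$, which is infinite by Chebotarev. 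Alternatively, one notes that the complement of each bad set has density bounded away from $1$ while your construction succeeds at a set of primes of density $1$, which is precisely the form in which \cite[Theorem 7.2]{KLN} packages the conclusion. Either way, you need this density or Chebotarev input, not mere infinitude. A minor further point: your worry about whether the relevant inertia groups of the $L_j$ are ``genuinely cyclic rather than merely metacyclic'' is unnecessary --- geometric inertia groups at branch points of regular Galois extensions in characteristic zero are always cyclic, so a decomposition group contained in one is automatically cyclic, and the contradiction with $C_2\times C_2$ is immediate.
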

\begin{proof}
As shown above, for almost all primes $p$ of $k$, there exists a $G$-extension of $k$ with the abelian, but non-cyclic decomposition group $C_2\times C_2$ at $p$. The assertion now follows immediately from \cite[Theorem 7.2]{KLN}.
\end{proof}

It should be understood that the same argument is immediately applicable to further simple groups $G$, using families of regular extensions which exist in the literature, such as in \cite{Malle} or \cite{KoeHurwitz}.
The main point of our approach (which is new compared to the treatment in \cite{KLN}) is that the existence of one-parameter families with group $G$ (and with certain technical assumptions) 
contradicts the existence of finite parametric sets for $G$.

%
%

\subsection{Example: Elementary-abelian groups}
\label{ex:elab}
As a further example, we present a case where one-parameter families of regular extensions suffice to give, via specialization, a complete answer to Grunwald problems away from a finite set of primes.
\begin{theorem}
\label{thm:elem_ab}
Let $G=(C_p)^n$ be an elementary-abelian $p$-group and $k$ be a number field. Then there exists a one-parameter family $E|k(s)(t)$ of regular Galois extensions with group $G$ and a finite set $S_0$ of primes of $k$ such that the following holds:

Let $S$ be any set of primes of $k$, disjoint from $S_0$, and for each $\nu\in S$ let $(I_\nu,D_\nu)$ be a pair of subgroups of $G$ and let $L_\nu|k_\nu$ be a local Galois extension with Galois group embedding into $G$, such that there exists an isomorphism $D_\nu \to {\textrm{Gal}}(L_\nu|k_\nu)$ which maps $I_\nu$ to the inertia subgroup of $L_\nu|k_\nu$.\\
Then there exist infinitely many specializations $E_{s_0,t_0}|k$ of $E|k(s)(t)$, still with Galois group $G$, whose completion at $\nu$ is $L_\nu|k_\nu$ and whose inertia and decomposition group at $\nu$ equal $(I_\nu,D_\nu)$ (for all $\nu\in S$). \footnote{Recall once again that this last equality is understood up to canonical identifications (and up to conjugacy in $G$, which however is trivial in this case), not just up to abstract isomorphism.}
\end{theorem}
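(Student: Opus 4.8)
The plan is to realize $G=(C_p)^n$ via an explicit one-parameter family whose branch points are all $k(s)$-rational and whose residue field extensions at these branch points contain $k$-regular Galois subextensions, so that Theorem~\ref{simultaneous} applies. The crucial feature we need, which distinguishes the elementary-abelian case from the general situation, is that here the group-theoretic constraints of a Grunwald problem are especially rigid: since $G$ is abelian, every inertia group $I_\nu$ is automatically normal in every decomposition group $D_\nu$, and the quotient $D_\nu/I_\nu$ must be cyclic (it is a decomposition-over-inertia quotient at a local field, hence generated by a Frobenius). Moreover, in $(C_p)^n$ the inertia group at a tamely ramified prime is forced to be cyclic of order dividing $p$, i.e.\ either trivial or a copy of $C_p$; and if $p$ itself is excluded from $S_0$ then all ramification at primes $\nu\in S$ is tame except possibly at primes above $p$, which we handle by putting them in $S_0$. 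Thus after excluding $S_0$, each prescribed pair $(I_\nu,D_\nu)$ is of the shape $I_\nu\in\{1,C_p\}$ and $D_\nu=\langle I_\nu, x_\nu\rangle$ for a single additional element $x_\nu\in G$.

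Next I would construct the family itself. The natural choice is an additive (Artin--Schreier-like, but in characteristic zero a Kummer/radical) construction: take $n$ independent $C_p$-extensions whose defining data depend on $s$ and $t$ in such a way that the resulting $G$-cover has $k(s)$-rational branch points with the correct inertia. Concretely, one wants each of the $n$ cyclic factors to contribute a branch point $t_i\in k(s)$ whose inertia group is the corresponding coordinate $C_p\le G$, and whose residue field extension contains a nonconstant $k$-regular Galois subextension $K_i|k(s)$ with $\mathrm{Gal}(K_i|k(s))$ cyclic (this is exactly the hypothesis feeding Theorem~\ref{simultaneous}iii)). The point of having $n$ such rational branch points, one per coordinate direction, is that by choosing the intersection multiplicities $I_\nu(t_0,t_i)$ independently via the Chinese Remainder Theorem we can switch each coordinate's inertia on or off at each prime $\nu$ independently, thereby realizing an arbitrary subgroup $I_\nu\le G$ as inertia (not merely a cyclic one); the abelian structure is what lets us assemble coordinate-wise data into a genuine subgroup.

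With such a family in hand, I would invoke Theorem~\ref{simultaneous} with $S_2:=S$ (and $S_1=\varnothing$, since here we prescribe the full local extension, not just a Frobenius class). For each $\nu\in S$ and each coordinate $i$, the datum $(A_i,x_i)$ is read off from $(I_\nu,D_\nu)$: the projection of $I_\nu$ to the $i$-th coordinate determines whether $A_i$ is trivial or $C_p$, and the Frobenius $x_\nu\in D_\nu$ determines the element $x_i\in G_i$. Theorem~\ref{simultaneous}b) then produces specializations $E_{s_0,t_0}|k$ with the prescribed inertia and decomposition groups (and, running over full arithmetic progressions, with full Galois group $G$ by Hilbert irreducibility). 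To upgrade "prescribed $(I_\nu,D_\nu)$ up to isomorphism" to "prescribed completion $L_\nu|k_\nu$", I would use that for abelian $G$ the local extension $L_\nu|k_\nu$ is determined, beyond its inertia and decomposition groups, by the unramified part, which in turn is pinned down by the Frobenius; Proposition~\ref{mainlemma_kln}iv) then guarantees that the actual completion can be matched, not just its group-theoretic type. The finite set $S_0$ collects the bad primes of Theorem~\ref{dec_gr_families}/\ref{simultaneous}, the primes dividing $p$, and the primes ramifying in the constant field $F_0$ of the residue extensions.

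\textbf{The main obstacle} I expect is the explicit construction of a family meeting \emph{all} the hypotheses of Theorem~\ref{simultaneous} simultaneously: namely $k(s)$-rational branch points whose residue field extensions contain \emph{nonconstant, $k$-regular} cyclic Galois subextensions of $k(s)$. The tension is Fried's branch cycle lemma, already flagged in the paper's remarks: a branch point with inertia in a class of order $p$ forces $\zeta_p$ into its residue field, so the residue extension is typically not itself $k$-regular, and one must exhibit a genuine regular subextension carrying the Frobenius data. Producing a single explicit $G$-cover over $k(s)$ in which this regular subextension is visibly nonconstant — so that the reduction argument of Theorem~\ref{dec_gr_families}iii) via Lang--Weil has something to act on — is the delicate part; the remainder is an assembly of the already-established congruence/CRT machinery. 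A clean way to sidestep an ad hoc construction would be to build $E$ as a compositum/fiber product of $n$ carefully chosen $C_p$-families, reducing the whole problem to the single-coordinate case $G=C_p$ and then checking that compositum preserves regularity of the constant reductions.
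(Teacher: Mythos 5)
There is a genuine gap, and it sits exactly where you located your ``main obstacle'' --- but the obstacle is worse than you describe, and your proposed workaround does not resolve it. You plan to build a family with $k(s)$-rational branch points of inertia order $p$ so that Theorem~\ref{simultaneous} applies directly over $k$. By the branch cycle lemma (as the paper's own remark following Theorem~\ref{simultaneous} points out), the field of definition of a branch point whose inertia is generated by an element of order $p$ must contain $\zeta_p$; since conjugacy classes of the abelian group $G$ are singletons, such a branch point can be $k(s)$-rational only if $\zeta_p\in k$. So for a general number field $k$ the family you want does not exist, and the issue is not merely that the residue field extension fails to be $k$-regular --- the hypotheses of Theorem~\ref{simultaneous} cannot be met at all over $k$. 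The paper's proof circumvents this with a descent step that is absent from your proposal: it constructs the family with $k(s)(\zeta_p)$-rational branch points (via the generic extension for $(C_p)^n$ and Saltman's local prescription theorem, not an explicit Kummer construction), applies Theorem~\ref{simultaneous} over $k(\zeta_p)$, and then observes that any prime $\nu$ at which a totally ramified $C_p$-extension of $k_\nu$ exists must split completely in $k(\zeta_p)|k$; this lets one take $s_0,t_0\in k_\nu$, approximate by elements of $k$ via Krasner's lemma, and use Hilbert irreducibility plus linear disjointness from $k(\zeta_p)|k$ to conclude that the local behaviour over $k$ matches that over $k(\zeta_p)$. This split-completely-plus-Krasner descent is the key idea you are missing.

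A second, smaller error: you propose to realize an \emph{arbitrary} subgroup $I_\nu\le G$ as inertia by switching on several coordinate branch points at the same prime via independent choices of intersection multiplicities. This mechanism is not available --- in Beckmann's criterion (Proposition~\ref{beckmann}) a good prime $\mathfrak p$ meets at most one branch point for a given $t_0$, so a single specialization cannot accumulate inertia from several branch points at one prime --- and it is also unnecessary: once primes above $p$ are placed in $S_0$, all local extensions in question are tame, so $I_\nu$ is forced to be cyclic (trivial or $C_p$) and $D_\nu=\langle I_\nu,x_\nu\rangle$, which is exactly the case the paper handles. Your treatment of the unramified part and the upgrade from the group pair $(I_\nu,D_\nu)$ to the actual completion $L_\nu|k_\nu$ via Proposition~\ref{mainlemma_kln}iv) is in the right spirit and agrees with the paper.
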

\begin{proof}
As in the proof of Theorem \ref{simultaneous}, it suffices to consider one prime at a time, and it is not necessary to verify the property ${\textrm{Gal}}(E_{s_0,t_0}|k)=G$.

We first assume that we can construct the $k(s)$-regular Galois extension $E|k(s)(t)$ such that the following holds:
\begin{itemize}
\item[(*)] For each cyclic subgroup $I$ of $G$ there exists a $k(s)(\zeta_p)$-rational branch point of $E|k(s)(t)$ with inertia group $I$, decomposition group $G$, and $k(\zeta_p)$-regular residue field extension. 
\end{itemize}

We show the assertion under the above assumption. So let $\nu\in S$, and $(I_\nu,D_\nu)$ a pair of subgroups of $G$ and $L_\nu|k_\nu$ be a local extension whose inertia and decomposition group at $\nu$ equal 
$(I_\nu,D_\nu)$, such that $D_\nu$ is isomorphic to ${\textrm{Gal}}(L_\nu|k_\nu)$ and $I_\nu$ is isomorphic to the inertia subgroup of $L_\nu|k_\nu$. Up to excluding finitely many $\nu$, we may assume $L_\nu|k_\nu$ to be tame.
Of course it then holds that $|D_\nu|\le p^2$, since $I_\nu$ and $D_\nu/I_\nu$ must be cyclic. Since the unramified case is dealt with in \cite{DG}, we may and will assume $I_\nu$ to be non-trivial.
Thus $D_\nu = \langle I_\nu, x\rangle$ with $|I_\nu| = p$ and $x\in G$.
It suffices to show that, for suitable $s_0,t_0\in k$, the specialization $E_{s_0,t_0}|k$ has inertia and decomposition group $(I_\nu,D_\nu)$ at $\nu$. Due to the above assumptions on $E|k(s)(t)$, 
Theorem \ref{simultaneous} immediately yields this claim if one replaces the constant field $k$ by $k(\zeta_p)$ (and the prime $\nu$ by a prime extending it in $k(\zeta_p)$). 
However, note that $\nu$ must be completely split in $k(\zeta_p)|k$ (or else there would be no totally ramified $C_p$-extension of $k_\nu$). 
In particular, the values $s_0,t_0\in k(\zeta_p)$ obtained from application of Theorem \ref{simultaneous} can be assumed to lie in $k_\nu$. 
Of course they can then be approximated by values in $k$, and via Krasner's lemma we can assume the assertion for $E(\zeta_p)_{s_0,t_0}|k(\zeta_p)$, with $k$-rational values $s_0,t_0$. 
Furthermore, combining the previous argument with Hilbert's irreducibility theorem, we can additionally assume that $E_{t_0,s_0}|k$ has group $G$ and is linearly disjoint from $k(\zeta_p)|k$. 
Since $\nu$ is completely split in $k(\zeta_p)|k$, the completion $k_\nu$ equals the completion at any prime extending $\nu$ in $k(\zeta_p)$, up to canonical isomorphism. 
The local extension at $\nu$ in $E_{s_0,t_0}|k$ is then the same as the corresponding local extension in $E(\zeta_p)_{s_0,t_0}|k(\zeta_p)$. This shows the assertion.

It remains to show that a $k(s)$-regular extension $E|k(s)(t)$ with the above properties actually exists.
It is well-known that the elementary-abelian group $G$ possesses a generic extension (over $\qq$ and then a fortiori over $k(s)(t)$). In fact this follows directly from the analogous result about cyclic groups of prime order (see e.g.\ 
\cite[Corollary 5.3.3]{Ledet}). 
A result of Saltman (\cite[Thm.\ 5.9]{S82}) then implies the following: If $F_1|k(s)(t)_{\mathfrak{P}_1}$,..., $F_r|k(s)(t)_{\mathfrak{P}_r}$ are any given $r$ Galois extensions of completions of $k(s)(t)$ at distinct primes 
$\mathfrak{P}_1$,..., $\mathfrak{P}_r$, and $\phi_i: {\textrm{Gal}}(F_i|k(s)(t)_{\mathfrak{P}_i})\to D_i \le G$ are isomorphisms, then there exists a Galois extension $E|k(s)(t)$ with these prescribed local extensions and 
prescribed decomposition groups. Clearly, as soon as $\mathfrak{P}_i$ has residue field $k(s)(\zeta_p)$, there is a totally ramified $C_p$-extension of the completion at $\mathfrak{P}_i$ of $k(s)(t)$. 
Compositum of this extension with a $k$-regular $(C_p)^{n-1}$-extension of $k(s)$ 
yields the desired local extension.\\
Finally, it is easy to ensure that $E|k(s)(t)$ is $k(s)$-regular; in fact it suffices to have linearly disjoint (over $k(s)$) residue field extensions at two primes of $k(s)(t)$, which can be achieved via Saltman's result, just as above, adding more local extensions if necessary.
\end{proof}
\begin{remark}
Recall that of course Grunwald problems for $G=(C_p)^n$ have long been known to always have a solution, due to the Grunwald-Wang theorem and also due to the existence of generic extensions. It is, however, interesting to note that by the above, 
the necessary dimension (in the sense of transcendence degree over $k$) to solve Grunwald problems (away from some finite set of primes) via specialization is only $2$, whereas the generic dimension 
(that is, the minimal transcendence degree of a generic extension) is $\ge n$ for $(C_p)^n$ (see Corollary 8.2.14 and Proposition 8.5.2 in \cite{Ledet}). It should be an interesting object for further study to investigate the above 
{\textit{``Hilbert-Grunwald dimension"}} for more general groups $G$. 
At present, we can only add that this dimension is $\ge 2$ for many finite groups, but we do not know of any example where it is provably larger than $2$.
\end{remark}

\subsection{Conjectural implications for the Grunwald problem}
\label{sec:conj}
To conclude, we present a strong (of course, hypothetical) version of the regular inverse Galois problem.
We show that Theorem \ref{simultaneous} implies that this hypothesis would yield positive answers over many number fields $k$ to all Grunwald problems (Problems \ref{prob:grunwald} and \ref{prob:grunwald_groups}) away from a finite set of primes (depending on $k$ and the group $G$).

So let $G$ be a finite group, and assume the following hypothesis:\\
(H) There exists a one-parameter family $E|\overline{\qq}(s)(t)$ of $G$-extensions with only $\overline{\qq}(s)$-rational branch points, such that 
each conjugacy class of cyclic subgroups of $G$ occurs at least once as an inertia subgroup of $E|\overline{\qq}(s)(t)$, and such that the decomposition group at each branch point equals the full centralizer of the respective inertia group in $G$.

\begin{theorem}
\label{hypothetical}
Given a finite group $G$ fulfilling Hypothesis (H) above, there exists a number field $k_0$ such that for all number fields $k\supseteq k_0$, Problems \ref{prob:grunwald} and \ref{prob:grunwald_groups} have a positive answer as long as the set $S$ is disjoint from some fixed finite set of primes of $k$ (depending on $G$).
\end{theorem}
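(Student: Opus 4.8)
The plan is to reduce the statement to Theorem \ref{simultaneous} after two preparatory moves: descending the family of Hypothesis (H) to a number field, and then enlarging that field so as to \emph{trivialize the local Frobenius action on tame inertia}. The latter is the conceptual point that makes a single one-parameter family suffice.

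First I would fix $k_0$. The family $E|\overline{\qq}(s)(t)$, together with its finitely many $\overline{\qq}(s)$-rational branch points, their inertia groups $I_i$, and the decomposition groups $D_i=C_G(I_i)$, is defined over some number field. Since $\overline{\qq}$ is algebraically closed, each residue field extension $F_i|\overline{\qq}(s)$ at a branch point is regular with Galois group $D_i/I_i=C_G(I_i)/I_i$. I would take $k_0$ large enough that: (a) $E|k_0(s)(t)$ is $k_0$-regular with the same branch points, inertia and decomposition groups; (b) each $F_i|k_0(s)$ is $k_0$-regular with full Galois group $C_G(I_i)/I_i$, which is achieved by absorbing the finitely many fields of constants arising in the descent; and (c) $\zeta_{\exp(G)}\in k_0$. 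Condition (c) is the crucial new ingredient, and condition (b) is what will let us invoke Theorem \ref{simultaneous}(iii) with $N_i=I_i$.

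Next I would exploit (c). For any $k\supseteq k_0$ and any prime $p$ of $k$ not dividing $|G|$, the completion $k_p\supseteq k$ contains $\mu_e$ for every $e\mid \exp(G)$, so $N(p)\equiv 1\pmod e$; hence in \emph{every} tame local Galois extension $L_p|k_p$ whose group embeds into $G$, Frobenius acts trivially on the cyclic inertia subgroup $I_p$. Consequently the decomposition group $D_p$ is abelian and satisfies $I_p\trianglelefteq D_p\le C_G(I_p)$ with $I_p$ and $D_p/I_p$ cyclic. Thus, away from $p\mid |G|$, the only pairs $(I_p,D_p)$ that can occur locally are the abelian ones lying inside the centralizer of a cyclic subgroup --- exactly the configurations supplied by Hypothesis (H). Now I would split $S$ into the unramified primes $S_1$ and the tamely ramified primes $S_2$ (discarding a finite set of primes depending only on $G$: those dividing $|G|$, ramified in $k_0$, or exceptional for Theorem \ref{simultaneous}). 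For $p\in S_2$, the group $I_p$ is conjugate to some $I_i$ from (H); taking full inertia $A_i=I_i$, residue subextension $K_i=F_i$ (so $G_i=C_G(I_i)/I_i$), and $x_i$ the image in $G_i$ of a Frobenius generator of $D_p$, one gets $\varphi_i^{-1}(\langle x_i\rangle)=D_p$ since $D_p\le C_G(I_p)$. Theorem \ref{simultaneous} then yields infinitely many specializations $E_{s_0,t_0}|k$ with inertia $I_p$ and decomposition group $D_p$ at each $p\in S_2$ and prescribed Frobenius class at each $p\in S_1$, which solves Problem \ref{prob:grunwald_groups}. For Problem \ref{prob:grunwald} I would additionally match the exact completion: fixing a good $s_0$, Proposition \ref{mainlemma_kln}(iv) realizes any admissible local field with the prescribed inertia and Galois group as a completion for infinitely many $t_0$, and the precise Kummer/uniformizer class is pinned down by choosing $t_0$ in suitable residue classes modulo a power of $p$ (via Beckmann's reading of the inertia off $\mathrm{ord}_p(t_0-m(s_0))$), the finitely many conditions being combined by the Chinese remainder theorem and reconciled with the full Galois group $G$ by Hilbert irreducibility.

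The main obstacle, and the heart of the argument, is the enlargement $\zeta_{\exp(G)}\in k_0$: it collapses every a priori possible tame ramification datum to the centralizer-type data that (H) was tailored to realize, and one must check that this enlargement does not spoil solvability. The technical care therefore concentrates in the descent step, namely guaranteeing genuinely $k_0$-regular residue extensions $F_i|k_0(s)$ with full Galois group $C_G(I_i)/I_i$ so that Theorem \ref{simultaneous}(iii) applies with $N_i=I_i$, and --- for Problem \ref{prob:grunwald} --- in upgrading control of the pair $(I_p,D_p)$ to control of the exact local field, where one must verify that letting $t_0$ range over congruence classes does sweep out all admissible tame extensions of $k_p$ with the prescribed group.
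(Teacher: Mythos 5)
Your proposal is correct and follows essentially the same route as the paper: descend the family of Hypothesis (H) to a number field $k_0$ enlarged to contain the relevant roots of unity, observe that this forces every tame local decomposition group (away from primes dividing $|G|$) to centralize its cyclic inertia group with cyclic quotient --- exactly the configurations realized at the branch points of (H) --- and then invoke Theorem \ref{simultaneous} with $N_i=I_i$, finishing Problem \ref{prob:grunwald} via Proposition \ref{mainlemma_kln}(iv). The only cosmetic difference is that the paper obtains $k$-regularity of the residue extensions by noting that decomposition groups cannot shrink under base change from $\overline{\qq}(s)$ and hence remain the full centralizer, whereas you absorb the fields of constants into $k_0$ directly; both are fine.
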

\begin{proof}
Of course $E|\overline{\qq}(s)(t)$ in Hypothesis (H) is defined over $k_0(s)(t)$ for some number field $k_0$, say as $\tilde{E}|k_0(s)(t)$. Up to increasing $k_0$, we may assume the following:
\begin{itemize}
\item[a)] All branch points are $k_0(s)$-rational,
\item[b)] the $|G|$-th roots of unity are contained in $k_0$.
\end{itemize} 
Let $k\supseteq k_0$ be a number field. 
Due to condition b), all inertia groups in $\tilde{E}k|k(s)(t)$ are central in the respective decomposition groups. Since decomposition groups at a given branch point in $\tilde{E}k|k(s)(t)$ cannot be smaller than in $E|\overline{\qq}(s)(t)$,
they must still equal the full inertia group centralizer, and the residue extensions must be $k$-regular over $k(s)$. 
Now let $S$ be any finite set of primes of $k$ (away from some finite exceptional set depending on $E|\overline{\qq}(s)(t)$), and for each $p\in S$ let $(A_p,B_p)$ be a pair of subgroups of $G$ such that $A_p$ is central in $B_p$ 
and $B_p/A_p$ is cyclic. Via choosing branch points of $\tilde{E}k|k(s)(t)$ with inertia group $A_p$, Hypothesis (H) above together with Theorem \ref{simultaneous} then imply that the induced Grunwald problem 
(Problem \ref{prob:grunwald_groups}) is solvable via specialization from $\tilde{E}k|k(s)(t)$. But again due to condition b), the decomposition group at {\textit{any}} tame Galois extension of the complete field $k_p$, 
with ramification index dividing $|G|$, centralizes the inertia group (and has cyclic quotient group).
So Problem \ref{prob:grunwald_groups} for the group $G$ always has a positive answer over $k$, with the exception of some finite set of primes of $k$. Finally, the analog for Problem \ref{prob:grunwald} follows via 
Prop.\ \ref{mainlemma_kln}iv).
\end{proof}

Note that the condition of existence of a one-parameter family as above is equivalent to existence of a rational genus-zero curve on a certain Hurwitz space (of a class tuple containing each conjugacy class at least once). As for the extra condition on ``large" decomposition groups, it is a ``generic" case that can in some sense be expected to be fulfilled most of the time (even though it is of course not easy to turn this intuition into a strict proof).

It should be noted that a simpler ``strong version" of the regular inverse Galois problem, also implying positive answer to all Grunwald problems, is the existence of a generic polynomial for the group $G$ (this implication is due to Saltman \cite{S82}). However, Saltman himself gave examples of groups which do not possess generic polynomials over any number field. Therefore this approach is too restrictive to solve the Grunwald problem in full. On the other hand, I am not aware of any group for which the above Hypothesis (H) is known to fail.

{\textbf{Acknowledgement}}:\\
I thank 
Pierre D\`ebes, 
Francois Legrand and Danny Neftin for helpful discussions.\\
This work was supported by the Israel Science Foundation (grant no. 577/15).

\end{document}